\documentclass[12pt]{article}
 \topmargin -23pt
\textwidth 6in
\textheight 8.5in
 \oddsidemargin  .3in
 \usepackage{amsthm,amsmath,amssymb, pgf, tikz, pgfpict2e}

\theoremstyle{plain}
\newtheorem{Thm}{Theorem}
\newtheorem{Cor}[Thm]{Corollary}

\newtheorem{lemma}[Thm]{Lemma}
\newtheorem{Prop}[Thm]{Proposition}
\newtheorem{Def}[Thm]{Definition}

\newtheorem{remark}[Thm]{Remark}
\newtheorem{example}[Thm]{Example}
\newtheorem{Ques}{Question}[section]

\date{June 23, 2020   }
\title{The Distinguishing Number  and Distinguishing Chromatic Number for Posets}

\author{Karen L. Collins\\
\small Dept. of Mathematics and Computer Science\\
\small Wesleyan University\\
\small Middletown CT 06459-0128\\
\small\tt kcollins@wesleyan.edu\
\and
Ann N. Trenk\thanks{
This work was supported by a grant from the Simons Foundation (\#426725, Ann Trenk). } \\
\small Department of Mathematics\\
\small Wellesley College\\
\small Wellesley MA 02481\\
\small\tt atrenk@wellesley.edu
}

\begin{document}
\maketitle

\begin{abstract}     
In this paper we introduce  the concepts of the distinguishing number and the distinguishing chromatic number of a poset. For a  distributive lattice $L$  and its set  $Q_L$ of join-irreducibles, we use   classic lattice theory to show that any linear extension of $Q_L$ generates a distinguishing 2-coloring of $L$.   We prove general upper bounds for the distinguishing chromatic number and particular upper bounds for the Boolean lattice and for divisibility lattices.     In addition, we show that the distinguishing number of any twin-free Cohen-Macaulay planar lattice is at most  2.

  \end{abstract}

\bibliographystyle{plain} 

 \bigskip\noindent \textbf{Keywords:   distributive lattice, distinguishing  number,  distinguishing chromatic number, Birkhoff's theorem}

\section{Introduction}   
\label{sect-intro}
The distinguishing number of a graph,  introduced by Albertson and Collins   \cite{AlCo96}, is the smallest integer $k$  for which the vertices can be colored using $k$ colors so that the only automorphism of the graph that preserves colors is the identity.   The distinguishing chromatic number,  introduced by Collins and Trenk \cite{CoTr06}, has the additional requirement that the coloring of the vertices is proper, that is, adjacent vertices get different colors.   The distinguishing number of graph $G$ is denoted by $D(G)$ and the distinguishing chromatic number by $\chi_D(G)$.   These and related topics have received considerable attention by many authors in recent years; see, for example, \cite{AlSo18,
BaPa17-b,
Cr18,
ImSmTuWa15,
Le17,
Pi17-b}.
In this paper, we introduce the distinguishing number and the  distinguishing chromatic number of a poset.

There are several challenges in studying  these parameters.    A distinguishing coloring of a graph or poset does not always yield a distinguishing coloring of  induced subgraphs or subposets.  It is possible to have an graph $H$ induced in  graph $G$ for which $D(H) > D(G)$, and the same holds for posets.  We provide an example of this  following Definition~\ref{disting-def}.
  In addition, the structure inherent in posets makes these parameters qualitatively different from the graph versions.


We end this section with  an overview of the rest of the paper. In Section~\ref{sect-prelim}, we provide background material about posets, lattices, and distributive lattices.    We introduce the distinguishing number of a poset in Section~\ref{sect-dist-nos}, and prove results about sums of chains, distributive lattices, and divisibility lattices.  In Section~\ref{sect-dist-chr}, we study the distinguishing chromatic number, giving upper bounds for the distinguishing chromatic number of distributive lattices, divisibility lattices, and Boolean lattices.  We also show  that there exist posets $P$ for which the gap between the distinguishing chromatic number of $P$ and that of its comparability graph is arbitrarily large.  We return to the distinguishing number in Section~\ref{sect-planar} and focus on ranked planar lattices (equivalently, Cohen-Macaulay) that are rank-connected.  We conclude with  several open questions.


 \section{Preliminaries} \label{sect-prelim}
In this section we provide   definitions related to posets and lattices, and present Birkhoff's classic lattice theorem, which we use as a tool in Section~\ref{sect-dist-nos} and Section~\ref{sect-dist-chr}.  For additional details and background, see \cite{St12}. 

\subsection{General poset definitions}\label{sect-poset-defn}

The posets we consider are finite and reflexive.  If $P$ is the poset $(X,\preceq)$, we call $X$ the \emph{ground set} of $P$ and refer to the elements of $X$ (which we also call the elements of $P$)  as \emph{points}.    We write $x \prec y$ if $x \preceq y$ and $x \neq y$.   If $x \preceq y$ or $y \preceq x$, we say points $x$ and $y$ are \emph{comparable}, and otherwise they are \emph{incomparable}.   We say that $y $ \emph{covers}  $x$
if $x \prec y$ and there is no other point $v$ with $x \prec v \prec y$.    An \emph{automorphism} of poset $P = (X,\preceq)$ is a bijection from $X$ to $X$ that preserves the relation $\preceq$.

 A set of pairwise comparable points in a poset   is called a \emph{chain}, and if the points are pairwise incomparable they form an \emph{antichain}.  An $r$-chain is a chain with $r$  points, and such a chain has  \emph{length} $r-1$.  The \emph{height} of a poset is the size of a  maximum chain and the \emph{width} is the size of a  maximum antichain.  
 
 If a poset has a unique minimal element, we call this element $\hat{0}$ and if it has a unique maximal element we call it $\hat{1}$.  We say that a poset with a $\hat{0}$ and $\hat{1}$ is \emph{ranked} if every maximal chain from $\hat{0}$  to $\hat{1}$ has the same length.  The \emph{rank} of a point $x$  in a poset, denoted by $rank(x)$ or $r(x)$,  is the length of a longest chain that has  $x$ as its largest element. For example, in Figure~\ref{ex-fig}, each of posets  $L_{pq^2}, L_{p^2q^2}, M$ and $L_{pqr}$  has a $\hat{0}$ and a $\hat{1}$, while poset $S_4$ has neither, and
 $L_{pq^2}, L_{p^2q^2}$ and $L_{pqr}$ are ranked, while poset $M$ is not. 
   
 A poset is \emph{planar}  if its Hasse diagram can be drawn in the plane with  no edges crossing and so that the edge from $a$ to $b$ has strictly increasing $y$-coordinate when $a \prec b$.   In Figure~\ref{ex-fig},  $L_{pq^2}, L_{p^2q^2}, M$ and $S_4$ are planar,  even though the drawing shown  of $S_4$  has edges crossing. 
We demonstrate in Section \ref{sect-planar} that poset $L_{pqr}$ is not planar (see Remark~\ref{B-not-planar}).

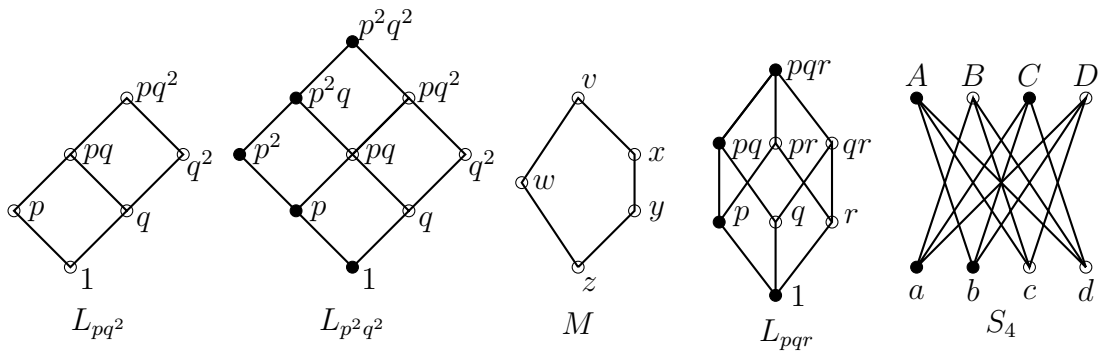
\begin{figure}[h]
\begin{center}
\begin{tikzpicture}[scale=.75]
\draw[black]
(0,0) circle [radius=3pt]
(-1,1) circle [radius=3pt]
(0,2) circle [radius=3pt]
(1,3) circle [radius=3pt]
;
\draw
(1,1) circle [radius=3pt]
(2,2) circle [radius=3pt]

;

\draw[thick] (0,2)--(1,3)--(2,2)--(1,1)--(0,2)--(-1,1) -- (0,0)--(1,1);
\node(0) at (.5, -1) {$L_{pq^2}$};
\node(0) at (.3,-.2) {$1$};
\node(0) at (1.3,.8) {$q$};
\node(0) at (2.3,1.8) {$q^2$};
\node(0) at (-.6,1) {$p$};
\node(0) at (.5,2) {$pq$};
\node(0) at (1.55,3.2) {$pq^2$};

\filldraw[black]
(3,2) circle [radius=3pt]
(4,3) circle [radius=3pt]
(5,4) circle [radius=3pt]
(5,0) circle [radius=3pt]
(4,1) circle [radius=3pt]
;
\draw

(6,1) circle [radius=3pt]
(5,2) circle [radius=3pt]
 (7,2) circle [radius=3pt]
(6,3) circle [radius=3pt]
;

\draw[thick] (5,2)--(6,3)--(7,2)--(6,1)--(5,2)--(4,1)-- (5,0)--(6,1);
\draw[thick] (4,1)--(3,2)--(4,3)--(5,4)--(6,3)--(5,2)--(4,3);

\node(0) at (5, -1) {$L_{p^2q^2}$};
\node(0) at (5.3,-.2) {$1$};
\node(0) at (6.3,.8) {$q$};
\node(0) at (7.3,1.8) {$q^2$};
\node(0) at (4.4,1) {$p$};
\node(0) at (5.5,2) {$pq$};
\node(0) at (6.55,3.2) {$pq^2$};
\node(0) at (3.5,2.1) {$p^2$};
\node(0) at (4.6,3.1) {$p^2q$};
\node(0) at (5.6,4.3) {$p^2q^2$};

\draw
(9,0) circle [radius=3pt]
(8,1.5) circle [radius=3pt]
(9,3) circle [radius=3pt]
(10,2) circle [radius=3pt]
(10,1) circle [radius=3pt]
;

\draw[thick] (9,0)--(8,1.5)--(9,3)--(10,2)--(10,1)--(9,0) ;
\node(0) at (9, -1) {$M$};
\node(0) at (9.2,-.3) {$z$};
\node(0) at (10.4,1) {$y$};
\node(0) at (10.4,2) {$x$};
\node(0) at (8.4,1.5) {$w$};
\node(0) at (9.2,3.3) {$v$};

\filldraw[black]
(11.5,.8) circle [radius=3pt]
(11.5,2.2) circle [radius=3pt]
(12.5,-.5) circle [radius=3pt]
(12.5,3.5) circle [radius=3pt]

;
\draw

(12.5,.8) circle [radius=3pt]
(12.5,2.2) circle [radius=3pt]
(13.5,.8) circle [radius=3pt]
(13.5,2.2) circle [radius=3pt]
;

\draw[thick] (13.5,2.2)--(12.5,3.5)--(11.5,2.2)--(12.5,.8);
\draw[thick](12.5,2.2)--(12.5,3.5)--(11.5,2.2)--(11.5,.8)--(12.5,-.5)--(12.5,.8)--(13.5,2.2)--(13.5,.8)--(12.5,-.5);
\draw[thick] (11.5,.8)--(12.5,2.2)--(13.5,.8);
\node(0) at (12.7, -1.2) {$L_{pqr}$};

\node(0) at (12.9,-.5) {$1$};
\node(0) at (11.9,.9) {$p$};
\node(0) at (12.9,.9) {$q$};
\node(0) at (13.85,.9) {$r$};
\node(0) at (12,2.1) {$pq$};
\node(0) at (13,2.1) {$pr$};
\node(0) at (13.95,2.1) {$qr$};
\node(0) at (13.1,3.5) {$pqr$};
\filldraw[black]
(15,0) circle [radius=3pt]
(16,0) circle [radius=3pt]
 (15,3) circle [radius=3pt]
(17,3) circle [radius=3pt]
;
\draw
 (17,0) circle [radius=3pt]
 (18,0) circle [radius=3pt]
 (16,3) circle [radius=3pt]
(18,3) circle [radius=3pt]
;

\draw[thick] (15,0)--(16,3)--(17,0)--(18,3)--(16,0)--(15,3)--(18,0)--(16,3);
\draw[thick](18,0)--(17,3)--(15,0)--(18,3);
\draw[thick] (17,3)--(16,0);
\draw[thick] (15,3)--(17,0);
\node(0) at (16.5, -1) {$S_4$};
\node(0) at (15,-.45) {$a$};
\node(0) at (16,-.4) {$b$};
\node(0) at (17,-.45) {$c$};
\node(0) at (18,-.4) {$d$};
\node(0) at (15,3.4) {$A$};
\node(0) at (16,3.4) {$B$};
\node(0) at (17,3.4) {$C$};
\node(0) at (18,3.4) {$D$};
\end{tikzpicture}

\caption{Examples of posets with distinguishing labelings ($p$, $q$ and $r$ are distinct primes).}
\label{ex-fig}
\end{center}
\end{figure}

\vspace{-.3in}

\subsection{Lattice definitions}

A point $z $ in poset $P$ is called the  \emph{meet} of $x$ and $y$ in $P$,  and denoted by $x\wedge y$, if it is   the unique largest element  in $ P$ such that $z\preceq x$ and $z\preceq y$. Thus,  if   $x\wedge y$ exists and  $a\preceq x$ and $a\preceq y$, then $a\preceq  x\wedge y$.     Similarly, a point $w \in P$ is called the \emph{join} of $x$ and $y$ in $P$, and denoted by $x\vee y$, if it is the unique smallest element $w\in P$ such that $w\succeq x$ and $w\succeq y$.  Thus,    if   $x\vee y$ exists  and $a\succeq x$ and $a\succeq y$, then $a\succeq x\vee y$.   

A poset $L$ is a \emph{lattice} if    $x\wedge y$ and $x\vee y$ both exist for all points $x$ and $y$ in L.  Furthermore, $L$ is a \emph{distributive lattice} if $\wedge$ and $\vee$ satisfy the distributive laws
\[(x\wedge y)\vee z = (x\vee z)\wedge (y\vee z)\]
\[(x\vee y)\wedge z = (x\wedge z)\vee (y\wedge z)\]
for all $x,y,z\in L$.
For example, all the posets in Figure~\ref{ex-fig} are lattices except for $S_4$, and all the lattices are distributive except for $M$. A lattice need not be ranked ($M$ is not ranked), but a distributive lattice is ranked, as  a consequence of Birkhoff's Theorem (Theorem~\ref{birkhoff-thm} in Section~\ref{sect-birktheorem}).  
 
A point $x$ in a lattice is called \emph{join-irreducible} if in the Hasse diagram of the lattice $x$ has exactly one downward edge.
 For example, in Figure~\ref{ex-fig}, the join-irreducible points of $L_{pq^2}$ are $p,q,$ and $q^2$, while there are no join-irreducible points in  poset $S_4$. 
As we will see in Birkhoff's Theorem, the join-irreducible points of a distributive lattice generate all the elements in the lattice by the join operation. In this way, they act like the prime numbers in the prime factorization of an integer.  

\subsection{Birkhoff's Theorem} \label{sect-birktheorem}

In this section, we present a fundamental  theorem due to Birkhoff (Theorem~\ref{birkhoff-thm}) and a corollary, both of which will be used as tools in later sections of the paper.
Let poset $P= (X,\preceq)$.
 The \emph{downset} of a point $a \in X$ is defined as $down(a) = \{x\in X: x \preceq a \}$ and the downset of a subset $A \subseteq X$ is defined as
$down(A) = \{x \in X: x \preceq a \hbox{ for some } a \in A\}$.
\begin{Def}{\rm   Let $P= (X,\preceq)$ be a poset.  The  \emph{downset lattice\/}  $J(P)$ has ground set $\{down(S): S\subseteq X\}$ and the relation is $\subseteq$.
}
\end{Def}

Observe that if $P$ is a poset,  then $J(P)$ is a distributive lattice,  in which the meet of elements $S$ and $T$ is $S \cap T$ and the join of these elements is $S \cup T$.

\begin{example} {\rm
 In Figure~\ref{L-150-fig}, the join-irreducible elements of $ L_{150}$ are $a = 2$, $b=3$, $c=5$,  and $d=25$.    When these are ordered using the ordering induced by $L_{150}$ they produce the poset labeled $Q_L$   also shown   in Figure~\ref{L-150-fig}.  There are 16 subsets of elements of $Q_L$,  producing 12 distinct downsets, which  are  given in the following table.  When these 12 downsets are ordered by set inclusion, we obtain the downset lattice $J(Q_L)$ which is isomorphic to the original lattice $L_{150}$.  }
\small
\begin{center}
\begin{tabular}{|c||c|c|c|c|c|c|c|c|c|c|c|c|c|c|c|c|}\hline
$S$ &  $\emptyset$ & $a$ &$ b$ & $c$ & $d$ & $ab$ & $ac$ & $ad$ & $bc$ &$ bd$ & $cd$ & $abc$ & $abd$ & $acd$ & $bcd$ & $abcd$ \\ \hline
$down(S)$& $\emptyset $ & $a$ & $b$ & $c$ & $cd$ & $ab$ & $ac$ & $acd$  & $bc$ & $bcd$ & $cd$ & $abc$ & $abcd$ & $acd$ & $bcd$ & $abcd$ \\ \hline
\end{tabular}       
\end{center}
\normalsize 

\label{JP2-example}
\end{example}

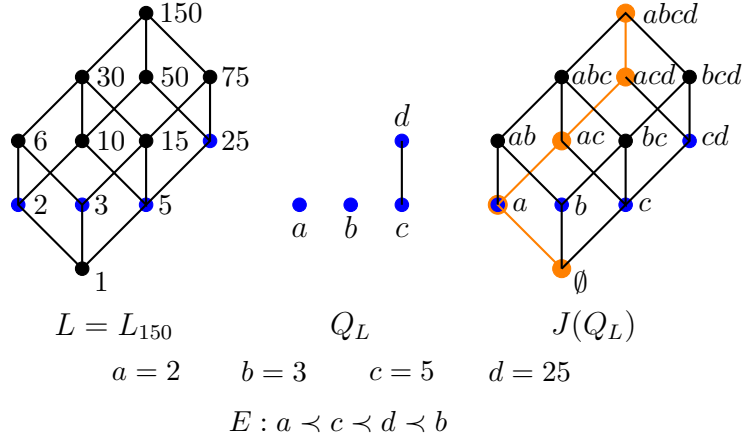
\begin{figure}[h]
\begin{center}
\begin{tikzpicture}[scale=.85]
\filldraw[black]
(1,0) circle [radius=3pt]
(0,2) circle [radius=3pt]
(1,2) circle [radius=3pt]
(2,2) circle [radius=3pt]
(1,3) circle [radius=3pt]
(2,3) circle [radius=3pt]
(3,3) circle [radius=3pt]
(2,4) circle [radius=3pt]

;
\filldraw[blue]
(0,1) circle [radius=3pt]
(1,1) circle [radius=3pt]
(2,1) circle [radius=3pt]
(3,2) circle [radius=3pt]
;

\draw[thick] (1,0)--(0,1)--(0,2)--(1,3)--(2,4)--(3,3)--(3,2)--(2,1)--(1,0);
\draw[thick] (1,0)--(1,1)--(2,2)--(3,3);
\draw[thick] (2,4)--(2,3)--(1,2)--(0,1);
\draw[thick] (1,1)--(0,2);
\draw[thick] (2,2)-- (2,1)--(1,2);
\draw[thick] (1,2)--(1,3)--(2,2);
\draw[thick] (2,3)--(3,2);
\node(0) at (1.5,-.9) {$L = L_{150}$};
\node(0) at (1.3,-.2) {\small $1$};
\node(0) at (.35,1) {\small $2$};
\node(0) at (1.3,1) {\small $3$};
\node(0) at (2.3,1) {\small $5$};
\node(0) at (0.35,2) {\small $6$};
\node(0) at (1.45,2) {\small $10$};
\node(0) at (2.45,2) {\small $15$};
\node(0) at (3.4,2) {\small $25$};
\node(0) at (1.45,3) {\small $30$};
\node(0) at (2.45,3) {\small $50$};
\node(0) at (3.4,3) {\small $75$};
\node(0) at (2.55,4) {\small $150$};

\filldraw[blue]
(4.4,1) circle [radius=3pt]
(5.2,1) circle [radius=3pt]
(6,1) circle [radius=3pt]
(6,2) circle [radius=3pt]
;

\draw[thick] (6,1)--(6,2);
\node(0) at (5.2,-.9) {$Q_L$};
\node(0) at (4.4,.6) {$a$};
\node(0) at (5.2,.65) {$b$};
\node(0) at (6,.6) {$c$};
\node(0) at (6,2.4) {$d$};

\filldraw[black]
(8.5,0) circle [radius=3pt]
(7.5,2) circle [radius=3pt]
(8.5,2) circle [radius=3pt]
(9.5,2) circle [radius=3pt]
(8.5,3) circle [radius=3pt]
(9.5,3) circle [radius=3pt]
(10.5,3) circle [radius=3pt]
(9.5,4) circle [radius=3pt]

;
\filldraw[orange] 
(8.5,0) circle [radius=4pt]
(7.5,1) circle [radius=4.2pt]
(8.5,2) circle [radius=4pt]
(9.5,3) circle [radius=4pt]
(9.5,4) circle [radius=4pt]
;
\filldraw[blue]
(7.5,1) circle [radius=3pt]
(8.5,1) circle [radius=3pt]
(9.5,1) circle [radius=3pt]
(10.5,2) circle [radius=3pt]
;
\draw[thick, orange] (8.5,0)--(7.5,1)--(8.5,2)--(9.5,3)--(9.5,4);
\draw[thick] (7.5,1)--(7.5,2)--(8.5,3)--(9.5,4)--(10.5,3)--(10.5,2)--(9.5,1)--(8.5,0);
\draw[thick] (8.5,0)--(8.5,1)--(9.5,2)--(10.5,3);
\draw[thick] (8.5,1)--(7.5,2);
\draw[thick] (9.5,2)-- (9.5,1)--(8.5,2);
\draw[thick] (8.5,2)--(8.5,3)--(9.5,2);
\draw[thick] (9.5,3)--(10.5,2);
\node(0) at (9,-.9) {$J(Q_L)$};
\node(0) at (8.8,-.2) {\small $\emptyset$};
\node(0) at (7.85,1) {\small $a$};
\node(0) at (8.8,1) {\small $b$};
\node(0) at (9.8,1) {\small $c$};
\node(0) at (7.88,2.1) {\small $ab$};
\node(0) at (8.95,2.05) {\small $ac$};
\node(0) at (9.95,2.08) {\small $bc$};
\node(0) at (10.9,2.1) {\small $cd$};
\node(0) at (8.97,3.05) {\small $abc$};
\node(0) at (9.95,3.05) {\small $acd$};
\node(0) at (11,3.05) {\small $bcd$};
\node(0) at (10.2,4) {\small $abcd$};

\node(0) at (2,-1.6) {\small $a=2$};
\node(0) at (4,-1.6) {\small $b=3$};
\node(0) at (6,-1.6) {\small $c=5$};
\node(0) at (8,-1.6) {\small $d=25$};
\node(0) at (5,-2.4) {\small $E: a\prec c \prec d\prec b$};
\end{tikzpicture}

\end{center}

\caption{The lattice $L_{150}$,  its poset $Q_L$  of join-irreducibles,  a linear extension $E$ of $Q_L$, and the downset lattice $J(Q_L)$, together with a distinguishing coloring of $J(Q_L)$.}
\label{L-150-fig}

\end{figure}

This example illustrates the following classic theorem due to Birkhoff \cite{Bi33} and called the \emph{Fundamental Theorem of Distributive Lattices} in \cite{St12}.

\begin{Thm} If $L $ is a distributive lattice and $Q_L$  is the  poset  induced by the join-irreducible points of $L$, then $J(Q_L)$ is isomorphic to $L$.
Indeed,   the  function $f:L \to J(Q_L)$ defined by $f(w) = \{y \in Q_L: y \preceq w \}$ is an isomorphism. 

\label{birkhoff-thm}
\end{Thm}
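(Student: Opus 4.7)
The plan is to verify that $f$ is a well-defined bijection that preserves order in both directions. First I would check that $f$ is well-defined, which is immediate from the definition of downset: if $y \preceq w$ and $z \in Q_L$ with $z \preceq y$, then $z \preceq w$, so $f(w) \in J(Q_L)$. Order-preservation in the forward direction is equally routine: $w_1 \preceq w_2$ forces $f(w_1) \subseteq f(w_2)$.

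The key structural claim I would establish next is that every $w \in L$ equals the join of the join-irreducibles below it, that is, $w = \bigvee f(w)$ (with the empty join interpreted as $\hat{0}$). I would prove this by induction on rank (or on the size of the downset of $w$). The base cases $w = \hat{0}$ and $w$ join-irreducible are immediate. If $w$ has at least two downward edges in the Hasse diagram, covering elements $a_1, \dots, a_k$, then $a_1 \vee \cdots \vee a_k = w$: the join is $\preceq w$, and it strictly exceeds each $a_i$ (no single $a_i$ dominates another), so by the covering property it must equal $w$. Applying the induction hypothesis to each $a_i$ expresses $w$ as a join of join-irreducibles in $f(w)$. Together with the trivial inequality $\bigvee f(w) \preceq w$, this gives equality. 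This identity immediately yields injectivity of $f$ and also shows that the inverse is order-preserving: if $f(w_1) \subseteq f(w_2)$, then $w_1 = \bigvee f(w_1) \preceq \bigvee f(w_2) = w_2$.

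The remaining step, and the one where I expect distributivity to do the essential work, is surjectivity: given a downset $S \in J(Q_L)$, I would set $w = \bigvee S$ and show $f(w) = S$. The inclusion $S \subseteq f(w)$ is obvious since each $s \in S$ is join-irreducible and $s \preceq w$. For the reverse inclusion, take $y \in Q_L$ with $y \preceq w$. Then by distributivity,
\[
y \;=\; y \wedge w \;=\; y \wedge \Bigl(\bigvee_{s \in S} s\Bigr) \;=\; \bigvee_{s \in S}(y \wedge s).
\]
Let $y^-$ denote the unique element covered by $y$ (which exists because $y$ is join-irreducible). If every $y \wedge s$ were strictly below $y$, each would satisfy $y \wedge s \preceq y^-$, so their join would also be $\preceq y^- \prec y$, contradicting the displayed equation. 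Hence some $y \wedge s = y$, giving $y \preceq s$; since $S$ is a downset in $Q_L$ and $y \in Q_L$, this forces $y \in S$.

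The main obstacle is precisely this last step: without distributivity the identity $y = \bigvee_s (y \wedge s)$ fails, and a join-irreducible lying below $\bigvee S$ need not lie below any individual element of $S$. Everything else is essentially bookkeeping about downsets and covers, but converting "$y \preceq \bigvee S$" into membership in $S$ is exactly the place where the hypothesis on $L$ must be invoked.
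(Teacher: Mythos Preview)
Your argument is correct and is essentially the standard proof of Birkhoff's theorem. The paper itself does not supply a proof of this statement; it attributes the result to Birkhoff and refers the reader to Stanley's \emph{Enumerative Combinatorics} for a proof, so there is no in-paper argument to compare against. One minor remark: since the paper deduces that distributive lattices are ranked only as a \emph{consequence} of this theorem (Remark~\ref{rank-rem}), your parenthetical ``or on the size of the downset of $w$'' is the safer phrasing for the induction, avoiding any appearance of circularity.
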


The following notation will be helpful as we use Theorem~\ref{birkhoff-thm} repeatedly.

\begin{Def} {\rm For a distributive lattice $L$, denote by $Q_L$ the induced poset of all join-irreducible points of $L$.  }

\end{Def}

Birkhoff's theorem  is fundamental in several ways. First it provides a method for checking whether a poset  $L$ is a distributive lattice without having to verify that every pair of points has a meet and a join, namely, construct the induced poset $Q_L$   and check whether the mapping $f$ from Theorem~\ref{birkhoff-thm} is an isomorphism. Additionally, any distributive lattice can be generated by starting with a poset $P$ and constructing $J(P)$. We utilize Theorem~\ref{birkhoff-thm} in proving that all distributive lattices have distinguishing number at most two (Theorem~\ref{distrib-lattice-thm}) and in characterizing those that have distinguishing number one (Theorem~\ref{D-one-thm}).  A proof of Theorem~\ref{birkhoff-thm} appears in \cite{St12}.

Observe that in  lattice $L_{150}$ shown in Figure~\ref{L-150-fig}, the point 75 can be written   as the join of  all the join-irreducibles less or equal to it,  namely $75=3\vee 5\vee 25$. Equivalently, in $J(Q_L)$, $\{b,c,d\} = \{b\}\cup \{c\}\cup \{c,d\}$. The next corollary shows this holds in general, that is, every point $w\in L$ can be written as the join of a unique subset of join-irreducibles of $L$. 
It  is a well-known consequence of  Birkhoff's Theorem and we  provide a proof for completeness.
 
\begin{Cor} \label{tough-cor}
Let $L$ be a distributive lattice and $f:L\rightarrow J(Q_L)$ be the isomorphism from Theorem~\ref{birkhoff-thm}.  
If $w\in L$, then $w=\bigvee _{z\in f(w)} z$. 
\end{Cor}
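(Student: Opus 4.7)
The plan is to show both inequalities $w' \preceq w$ and $w \preceq w'$, where $w' := \bigvee_{z \in f(w)} z$, and then conclude equality by antisymmetry. The first inequality is immediate: by definition of $f$, every element $z \in f(w)$ satisfies $z \preceq w$, so $w$ is an upper bound of the set $f(w)$, and hence the join $w' = \bigvee_{z \in f(w)} z$ satisfies $w' \preceq w$.

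For the reverse inequality $w \preceq w'$, I would use the fact that $f$ is an order-isomorphism onto $J(Q_L)$, so it suffices to show $f(w) \subseteq f(w')$. Under $f$, joins in $L$ correspond to unions of downsets (since that is the join operation in $J(Q_L)$, as observed after the definition of the downset lattice). Therefore
\[
f(w') \;=\; f\!\Bigl(\,\bigvee_{z \in f(w)} z\Bigr) \;=\; \bigcup_{z \in f(w)} f(z) \;=\; \bigcup_{z \in f(w)} \{y \in Q_L : y \preceq z\}.
\]
Now for any $z \in f(w)$, $z$ itself is a join-irreducible with $z \preceq z$, so $z \in f(z) \subseteq f(w')$. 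This gives $f(w) \subseteq f(w')$, and applying $f^{-1}$ yields $w \preceq w'$.

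Combining the two inequalities gives $w = w'$. The main obstacle, such as it is, is the reverse direction $w \preceq w'$: here one has to recognize that the natural way to compare $w$ with the join of join-irreducibles below it is to transfer the problem to the downset lattice $J(Q_L)$ via the Birkhoff isomorphism, where the join is just a union and the containment $f(w) \subseteq f(w')$ is transparent. Everything else is formal manipulation using the definitions of $f$, $Q_L$, and $J(Q_L)$.
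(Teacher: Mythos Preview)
Your proof is correct and follows essentially the same approach as the paper's: both use that $f$ carries joins to unions and that each $z \in f(w)$ satisfies $z \in f(z)$ by reflexivity. The only cosmetic difference is that the paper establishes the equality $f(w) = \bigcup_{z \in f(w)} f(z)$ directly in $J(Q_L)$ and then applies $f^{-1}$, whereas you split the argument into two inequalities, handling $w' \preceq w$ in $L$ and $w \preceq w'$ via $J(Q_L)$.
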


\begin{proof}

Fix $w \in L$ and let   $f(w) = \{y_1, y_2, \ldots, y_t\}$.  For each $i:  1 \le i \le t$, the reflexive property implies that $y_i \in f(y_i)$, and the fact that $f(w)$ is a downset implies that $f(y_i) \subseteq f(w)$.    Therefore, $f(w) = f(y_1) \cup f(y_2) \cup \cdots \cup f(y_t)$.    However, $f$ is an isomorphism, so applying $f^{-1}$ to both sides yields $w = y_1 \vee y_2 \vee \cdots \vee y_t$, as desired.

  \end{proof}

 For a distributive lattice $L$, the points of $J(Q_L)$ are downsets, and the rank of each point is the cardinality of its downset.  We record this below.
\begin{remark}
\label{rank-rem}
Every  distributive lattice $L$ is ranked and the rank of a point $w$ is $|\{z \in Q_L: z \preceq w\}|.$
\end{remark}


 \section{Distinguishing numbers}\label{sect-dist-nos}
 We begin with the definition of  the distinguishing number of a poset and some examples.
 
\begin{Def} {\rm
  A coloring of the points of poset $P$ is  \emph{distinguishing} if the only automorphism of $P$ that preserves colors is the identity. The \emph{distinguishing number} of $P$, denoted $D(P)$ is the least integer $k$ so that $P$ has a distinguishing coloring using $k$ colors.
}
\label{disting-def}
\end{Def}

Distinguishing colorings are shown in Figure~\ref{ex-fig}, and the distinguishing numbers are the following: $D(L_{pq^2})=1$, $D(L_{p^2q^2})=2$, $D(M)=1$, $D(L_{pqr})=2$, $D(S_4)=2$.    Note that while $D(M) = 1$, if we remove point $x$ from $M$, the resulting induced subposet $M-x$ has $D(M-x) = 2$;  thus an induced subposet can have a larger distinguishing number than that of the original.
Observe that antichains are the only posets for which each point must receive a different color in a distinguishing coloring.  

The \emph{comparability graph} of poset $P$ is the graph $G_P= (V,E)$ where $V$ is the ground set of $P$ and $xy \in E$ if and only if $x$ and $y$ are comparable in $P$.   Any automorphism of  a  poset $P$ is also an automorphism of its comparability graph, $G_P$.  This justifies the following remark.
 
\begin{remark}  

$D(P) \le D(G_P)$.
\end{remark}

 Some automorphisms of the graph $G_P$ are not automorphisms of the poset $P$ because they do not preserve the ordering of points in $P$.  The following example shows that $D(P)$  can differ significantly from  $D(G_P)$  and $D(\overline{G_P})$.  
If  $P$ is an  $n$-chain  then  $D(P) = 1$.  However, the  comparability graph  of $P$ is the complete graph $K_n$, and the incomparability graph of $P$ is its complement $\overline{K_n}$, and each of these has distinguishing number $n$.

\subsection{Sums of chains } 

In the next two results, we find the distinguishing number for posets consisting of the sum of chains.
 
  \begin{Prop}
    Let $P$ be the poset consisting of the sum  of $t$ chains, each consisting of $r$ points and let $k$ be the positive integer for which $(k-1)^r < t \le k^r$.  Then $D(P) = k$.  
    \label{chains-lem}
    \end{Prop}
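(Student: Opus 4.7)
The plan is to identify the automorphism group of $P$ explicitly and then translate the distinguishing condition into a purely combinatorial statement about color sequences on chains.

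First I would observe that any automorphism $\sigma$ of $P$ must send each of the $t$ chains to some chain. This is because two points are comparable in $P$ iff they lie in the same chain, so the comparability classes (which coincide here with the chains) are preserved by $\sigma$. Within a single $r$-chain, an automorphism must preserve $\preceq$, so it must fix each point (it cannot reverse, since reversal does not preserve $\preceq$). Consequently $\mathrm{Aut}(P) \cong S_t$, acting by permuting the $t$ chains while fixing the internal order of each.

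Given a coloring $c$ of $P$ with $k$ colors, associate to each chain $C_i$ the ordered $r$-tuple $\mathbf{c}(C_i) \in \{1,\dots,k\}^r$ recording the colors of its points from bottom to top. Since automorphisms can only permute chains (preserving internal position), $c$ is distinguishing if and only if the tuples $\mathbf{c}(C_1), \ldots, \mathbf{c}(C_t)$ are pairwise distinct: any repetition allows a nontrivial transposition of two chains to preserve colors, and conversely if all tuples differ then only the identity permutation of chains can preserve the coloring.

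From this reformulation both bounds are immediate. For the upper bound, since $t \le k^r$ there are enough distinct tuples in $\{1,\dots,k\}^r$ to assign a different one to each chain, giving a distinguishing $k$-coloring, so $D(P) \le k$. For the lower bound, if only $k-1$ colors are used then there are just $(k-1)^r < t$ available tuples, so by pigeonhole two chains must receive identical tuples, giving a nontrivial color-preserving automorphism; hence $D(P) \ge k$. Combining these yields $D(P) = k$. I do not expect any real obstacle here: the main content is the (easy) verification that $\mathrm{Aut}(P) = S_t$ on the chains, which requires noting that a chain cannot be reversed by an order-preserving bijection.
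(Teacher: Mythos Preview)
Your proof is correct and follows essentially the same approach as the paper: both show $D(P)\le k$ by assigning distinct color tuples from $\{1,\dots,k\}^r$ to the $t$ chains, and both show $D(P)>k-1$ by pigeonhole on the $(k-1)^r$ available tuples. You are simply more explicit than the paper in identifying $\mathrm{Aut}(P)\cong S_t$ and in noting that an order-preserving bijection of a chain must be the identity; the paper leaves these points implicit.
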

    
    \begin{proof}
    First we find a distinguishing coloring of $P$ using $k$ colors.  There are $k^r$ different ways to color the elements of an $r$-chain when $k$ colors are available.  Coloring the elements of each $r$-chain differently is a distinguishing coloring since any automorphism of $P$ maps an $r$-chain to an $r$-chain.  Thus $D(P) \le k$.  We next show $D(P) > k-1$.    For a contradiction, suppose there is a distinguishing coloring of $P$ using $k-1$ colors.  There are $(k-1)^r$ ways to color each chain and since $t > (k-1)^r$, two chains have the same coloring.  The automorphism that swaps those two chains is non-trivial, a contradiction.
    \end{proof}
    
    Combining Proposition~\ref{chains-lem} with the following proposition, allows us to compute the distinguishing number of any poset that consists of the sum of chains.
    
    \begin{Prop} Let $P$ be the sum of chains and partition $P$ as $P_1 + P_2 + \cdots  + P_m$ where $P_i$ consists of $t_i$ chains, each consisting of $r_i$ points, where     $r_1, r_2, \cdots , r_m$ are distinct.  Then $D(P) = \max \{D(P_i): 1 \le i \le m\}$.
    \label{chains-prop}
    \end{Prop}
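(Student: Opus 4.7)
The plan is to exploit the fact that the chain-length parameters $r_1, r_2, \ldots, r_m$ are distinct, which forces every automorphism of $P$ to respect the partition $P = P_1 + P_2 + \cdots + P_m$. This reduces the problem to analyzing each $P_i$ independently.

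First I would establish the structural observation that any automorphism $\sigma$ of $P$ satisfies $\sigma(P_i) = P_i$ for each $i$. This follows because an automorphism maps maximal chains to maximal chains of the same length, and the distinct values $r_1, \ldots, r_m$ mean there is no ambiguity about which $P_i$ a given maximal chain belongs to. Consequently, the automorphism group of $P$ is the direct product of the automorphism groups of the $P_i$, and a coloring of $P$ is distinguishing if and only if its restriction to each $P_i$ is distinguishing.

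For the upper bound, set $k = \max\{D(P_i): 1 \le i \le m\}$. For each $i$, fix a distinguishing coloring of $P_i$ using $D(P_i) \le k$ colors, drawn from a common palette of $k$ colors. This produces a coloring of $P$ using $k$ colors. By the structural observation, any color-preserving automorphism $\sigma$ restricts to a color-preserving automorphism of each $P_i$, which must be the identity since the coloring of $P_i$ is distinguishing; hence $\sigma$ is the identity, giving $D(P) \le k$.

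For the lower bound, suppose $P$ is distinguishingly colored using $c$ colors. Then for each $i$, the induced coloring on $P_i$ must be distinguishing on $P_i$: otherwise there is a nontrivial color-preserving automorphism $\tau$ of $P_i$, and extending $\tau$ by the identity on $P \setminus P_i$ yields a nontrivial color-preserving automorphism of $P$ (again using that the partition is preserved, and that $\tau$ respects the order since every comparable pair in $P$ lies within a single $P_i$). Thus $c \ge D(P_i)$ for every $i$, so $c \ge k$, giving $D(P) \ge k$. The only subtlety worth double-checking is this extension step, but it is immediate because $P$ is a disjoint sum, so no new comparabilities are introduced between $P_i$ and its complement. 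Combining the two bounds yields the claimed equality.
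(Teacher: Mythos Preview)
Your proof is correct and follows exactly the same idea as the paper's: the key observation is that any automorphism of $P$ maps each $P_i$ to itself because the chain lengths $r_i$ are distinct. The paper states only this single sentence and leaves the two inequalities implicit, whereas you have spelled out both directions carefully; but the approach is the same.
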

\begin{proof}
The result follows immediately from the fact that any automorphism of $P$ will map $P_i$ to itself for each $i$.
\end{proof}

\subsection{Distributive lattices} \label{sect-dist-latt}

We find the distinguishing number of any distributive lattice in Theorems~\ref{D-one-thm} and \ref{distrib-lattice-thm}.

 In showing that a coloring is distinguishing it can be helpful to analyze the points individually or in groups using the following concept of \emph{pinning}. 
  
\begin{Def} {\rm Let $P$ be a poset with a color assigned to each point. We say that a point $x$ is \emph{pinned} if every automorphism of $P$ that preserves colors maps $x$ to itself. }

\end{Def}
Note that  a coloring of the ground set of a poset $P$ is distinguishing precisely when every point is pinned. 

\begin{Prop}\label{tough-pin-prop}
   Let   $\phi$  be any coloring of a distributive lattice $L$.  If $\phi$ restricted to $Q_L$ pins every point of $Q_L$, then $\phi$ pins every point of $L$.
\end{Prop}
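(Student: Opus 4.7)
The plan is to exploit Birkhoff's theorem, and specifically Corollary~\ref{tough-cor}, which says every element of $L$ is the join of the join-irreducibles below it. If we can show that any color-preserving automorphism $\sigma$ of $L$ fixes $Q_L$ pointwise, then we can recover $\sigma(w) = w$ for every $w \in L$ by expressing $w$ as a join of join-irreducibles and using that poset automorphisms commute with joins.

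First I would observe that any automorphism of $L$ maps $Q_L$ to $Q_L$. This is because the join-irreducible points are exactly those with a unique downward edge in the Hasse diagram, a property that is preserved by any poset automorphism. Now let $\sigma$ be any color-preserving automorphism of $L$; then $\sigma$ restricted to $Q_L$ is a color-preserving automorphism of the induced subposet $Q_L$. By the hypothesis that $\phi$ pins every point of $Q_L$, we conclude $\sigma(z) = z$ for every $z \in Q_L$.

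Next I would handle an arbitrary point $w \in L$. By Corollary~\ref{tough-cor}, $w = \bigvee_{z \in f(w)} z$, where $f(w) \subseteq Q_L$. Since $\sigma$ is a poset automorphism, it preserves the join operation: for any $x, y \in L$, $\sigma(x \vee y) = \sigma(x) \vee \sigma(y)$, because the join is uniquely characterized in terms of the order. Applying this (inductively over a finite set), together with $\sigma(z) = z$ for each $z \in f(w) \subseteq Q_L$, yields
\[
\sigma(w) = \sigma\!\left(\bigvee_{z \in f(w)} z\right) = \bigvee_{z \in f(w)} \sigma(z) = \bigvee_{z \in f(w)} z = w.
\]
Thus every point of $L$ is pinned.

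The only real step to justify carefully is the first one, namely that $\sigma$ restricts to an automorphism of $Q_L$; the rest is a direct application of Corollary~\ref{tough-cor} and the order-theoretic definition of join. I do not anticipate any serious obstacle.
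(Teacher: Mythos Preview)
Your proof is correct and follows essentially the same approach as the paper's: use Corollary~\ref{tough-cor} to write each $w\in L$ as a join of join-irreducibles, then use that automorphisms preserve joins. The one detail you spell out explicitly---that any automorphism of $L$ maps $Q_L$ to itself and hence restricts to an automorphism of the subposet $Q_L$---is left implicit in the paper's short proof, so your version is a slightly more careful rendition of the same argument.
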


\begin{proof}
 Let $\phi$ be a coloring of $L$ so that $\phi$ restricted to $Q_L$ pins every point of $Q_L$.   By Corollary~\ref{tough-cor}, every element of $L$ is the join of a unique set of elements of $Q_L$. Since joins are preserved by isomorphism, it follows that every point of $L$ is pinned.
\end{proof}

We now have the tools to determine the distinguishing number of any distributive lattice.  Theorem~\ref{distrib-lattice-thm} shows that  any distributive lattice has distinguishing number at most two and Theorem~\ref{D-one-thm} characterizes those distributive lattices whose distinguishing number is one.   The proof of Theorem~\ref{distrib-lattice-thm} is illustrated in Examples~\ref{example-birkhoff} and \ref{example-birkhoff-2}.

 \begin{Thm} If $L$ is  a distributive lattice, then  $D(L) = 1$ if and only if $D(Q_L)=1$.
\label{D-one-thm}
\end{Thm}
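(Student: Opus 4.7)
The plan is to prove the two directions separately, using Proposition~\ref{tough-pin-prop} for one direction and Birkhoff's Theorem for the other.

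For the forward direction, I would assume $D(Q_L) = 1$ and show $D(L) = 1$. This should follow almost immediately from Proposition~\ref{tough-pin-prop}. The assumption $D(Q_L) = 1$ means the trivial $1$-coloring pins every point of $Q_L$; equivalently, $Q_L$ admits no nontrivial automorphism. So applying the $1$-coloring $\phi$ to all of $L$, the restriction to $Q_L$ pins every join-irreducible, and Proposition~\ref{tough-pin-prop} upgrades this to the statement that $\phi$ pins every point of $L$. Thus $D(L) = 1$.

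For the reverse direction, I would prove the contrapositive: if $D(Q_L) \geq 2$, then $Q_L$ has a nontrivial automorphism $\sigma$, and I would lift $\sigma$ to a nontrivial automorphism of $L$, proving $D(L) \geq 2$. The lifting goes through Birkhoff's Theorem (Theorem~\ref{birkhoff-thm}): since $L \cong J(Q_L)$ via $f(w) = \{y \in Q_L : y \preceq w\}$, it suffices to exhibit a nontrivial automorphism of $J(Q_L)$. The natural candidate is the map $\widetilde{\sigma}: J(Q_L) \to J(Q_L)$ defined by $\widetilde{\sigma}(S) = \{\sigma(y) : y \in S\}$. I would verify three things: (i) $\widetilde{\sigma}(S)$ is a downset, which holds because $\sigma$ is an order automorphism of $Q_L$; (ii) $\widetilde{\sigma}$ is a bijection that preserves $\subseteq$, inherited directly from $\sigma$ being a bijection that preserves $\preceq$; and (iii) $\widetilde{\sigma}$ is nontrivial, because if $\sigma(y_0) \neq y_0$ for some $y_0 \in Q_L$, then the downset $down(y_0)$ is mapped to $down(\sigma(y_0))$, and these are distinct since they have different maxima.

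Conjugating by the isomorphism $f$ then yields the automorphism $f^{-1} \circ \widetilde{\sigma} \circ f$ of $L$, which is nontrivial because $\widetilde{\sigma}$ is. Hence $D(L) \geq 2$, completing the contrapositive.

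The main obstacle is the verification in step (iii) that $\widetilde{\sigma}$ is genuinely nontrivial on $J(Q_L)$; the key observation is that $\sigma$ and $\widetilde{\sigma}$ agree on the principal downsets $down(y)$ in the sense that $\widetilde{\sigma}(down(y)) = down(\sigma(y))$, so a fixed point of $\widetilde{\sigma}$ on these principal downsets forces a fixed point of $\sigma$. Everything else is a routine translation between the lattice $L$ and its Birkhoff representation $J(Q_L)$.
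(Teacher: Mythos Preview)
Your proposal is correct and follows essentially the same approach as the paper: one direction via Proposition~\ref{tough-pin-prop}, the other by lifting a nontrivial automorphism of $Q_L$ to $L$. The paper is terser on the lifting step, invoking Corollary~\ref{tough-cor} to extend $\sigma$ directly, whereas you route the construction explicitly through $J(Q_L)$ via $\widetilde{\sigma}(S)=\sigma(S)$ and then conjugate by the Birkhoff isomorphism $f$; these are the same argument unpacked to different levels of detail. (A minor note: what you call the ``forward direction'' is the implication $D(Q_L)=1\Rightarrow D(L)=1$, which is usually called the converse.)
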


\begin{proof}
By definition, any poset has distinguishing number equal to $1$ if and only if it has no non-trivial automorphisms. Let $\phi$ be a coloring of $L$  in which every vertex is colored the same. If $Q_L$  has no non-trivial automorphisms, then every point in $Q_L$ is pinned by $\phi$ and by Proposition~\ref{tough-pin-prop}, every point in $L$ is pinned. 
Conversely, if $Q_L$ has a non-trivial automorphism $\sigma$, then by Corollary~\ref{tough-cor}, $\sigma$ can be extended to a non-trivial automorphism of $L$, contradicting $D(L)=1$. 
\end{proof}


\begin{Thm}   If $L= (X, \preceq)$ is a distributive lattice, then $D(L)\leq 2$ and $D(L) = 2$ if and only if $D(Q_L)>1$. 
\label{distrib-lattice-thm}
\end{Thm}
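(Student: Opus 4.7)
The plan is to reduce the biconditional to the inequality $D(L) \le 2$: once that bound is established, Theorem~\ref{D-one-thm} immediately yields $D(L) = 2 \iff D(L) > 1 \iff D(Q_L) > 1$. So the real content is producing a distinguishing $2$-coloring of $L$, and I would use a linear extension of $Q_L$ as hinted at in the abstract.

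Index $Q_L = \{y_1, y_2, \dots, y_n\}$ according to a linear extension, so that $y_i \prec y_j$ in $Q_L$ forces $i < j$. Each initial segment $D_i = \{y_1, \dots, y_i\}$ is then a downset of $Q_L$, and via the Birkhoff isomorphism $f : L \to J(Q_L)$ of Theorem~\ref{birkhoff-thm} the preimages $w_i := f^{-1}(D_i)$ form a chain $\hat{0} = w_0 \prec w_1 \prec \cdots \prec w_n = \hat{1}$ in $L$, with $\text{rank}(w_i) = i$ by Remark~\ref{rank-rem}. I would let $\phi$ assign color $1$ to the chain $\{w_0, \dots, w_n\}$ and color $2$ to every other point of $L$.

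To verify $\phi$ is distinguishing, let $\sigma$ be any color-preserving automorphism of $L$. Because $\sigma$ preserves ranks (a general fact about automorphisms of ranked posets) and $w_i$ is the unique color-$1$ element of rank $i$, every $w_i$ is pinned. Next, $\sigma$ preserves covering relations, so it carries join-irreducibles to join-irreducibles and restricts to an automorphism of $Q_L$. By construction, $y_i$ is the unique element of $Q_L$ satisfying $y_i \preceq w_i$ and $y_i \not\preceq w_{i-1}$, and since $\sigma$ fixes both $w_i$ and $w_{i-1}$ it must fix $y_i$. Thus every point of $Q_L$ is pinned, and Proposition~\ref{tough-pin-prop} propagates the pinning to all of $L$.

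The main obstacle I expect is the bridging observation that the chain $(w_i)$ produced by a linear extension isolates exactly one join-irreducible at each step, namely $f(w_i) \setminus f(w_{i-1}) = \{y_i\}$. This translates the combinatorial choice of linear extension into the lattice-theoretic statement needed for pinning; once it is distilled from Birkhoff's theorem, the remainder of the argument is a brief assembly of the earlier lemmas.
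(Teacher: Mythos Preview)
Your proposal is correct and follows essentially the same line as the paper's proof: take a linear extension of $Q_L$, color the resulting maximal chain of downsets with one color and everything else with the other, observe that the chain points are pinned by rank-uniqueness, deduce that each join-irreducible $y_i$ is pinned because it is the unique element of $Q_L$ lying below $w_i$ but not below $w_{i-1}$, and finish via the join-irreducible decomposition (Corollary~\ref{tough-cor}/Proposition~\ref{tough-pin-prop}). The only cosmetic difference is that the paper colors just $w_1,\dots,w_{t-1}$ (omitting $\hat 0$ and $\hat 1$, which are automatically pinned), whereas you color the entire chain $w_0,\dots,w_n$; this has no effect on the argument.
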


\begin{proof}
Let $L= (X \preceq)$ be a distributive lattice and $Q_L = (Y, \prec)$ where    $Y = \{y_1,y_2, \ldots, y_t\}$.    By Theorem~\ref{birkhoff-thm}, $L$ is isomorphic to $J(Q_L)$.   We will provide a distinguishing coloring of $J(Q_L)$ using two colors, showing $D(L)\leq 2$. The remainder of the theorem follows from Theorem~\ref{D-one-thm}. 

Let $f:L \to J(Q_L)$ be the isomorphism defined in Theorem~\ref{birkhoff-thm},  and  let     $f(Y) = \{f(y_1),f(y_2), \ldots, f(y_t)\}$.  The property of being join-irreducible is preserved under isomorphism, thus   $f(Y)$ is the set of join-irreducible points of $J(Q_L)$.  

    Let $E:y_1 \prec y_2 \prec y_3 \prec \cdots \prec y_t$ be a linear extension of $Q_L$.  Color the following chain of  elements of $J(Q_L)$ using the color red: \\ $ \{f(y_1)\}, \  \{f(y_1), f(y_2)\}, \ 
\{f(y_1), f(y_2), f(y_3)\}, \  \cdots ,\ \{f(y_1), f(y_2), f(y_3), \cdots f(y_{t-1})\}.$    Color the remaining elements green.  We show this is a distinguishing coloring of $J(Q_L)$ by showing that every nontrivial  automorphism of $J(Q_L)$ preserves colors.

Since poset automorphisms preserve rank and there is at most one red vertex at each rank of $J(Q_L)$,  we know the red vertices are pinned.    Next we show all  green points in $f(Y)$ are pinned. For  $2\leq i\leq t-1$, each  $f(y_i)\in f(Y)$ is less than a unique lowest red point in the chain of red vertices of $J(Q_L)$.  In particular,   $\{f(y_i)\} \preceq \{f(y_1), f(y_2), f(y_3), \cdots , f(y_i)\}$ but $\{f(y_i)\} $ is  incomparable to all lower ranked red points. Hence each is pinned. Also, $f(y_t)$ is the only point in $f(Y)$ that is not less than any red vertex, hence it is pinned. Thus the green points in $f(Y)$ are pinned. By Corollary~\ref{tough-cor},  every point of $J(Q_L)$ that is not in $f(Y)$ is the  join of a unique set of elements of $f(Y)$, and hence is pinned.  Thus all points are pinned and the coloring is distinguishing.
\end{proof}

The next two examples illustrate the proof of Theorem~\ref{distrib-lattice-thm}.

\begin{example}
{\rm For the distributive lattice $L = L_{150}$ in Figure~\ref{L-150-fig}, the set of join-irreducible points is $Y = \{a,b,c,d\}$, where $a=2,\  b=3, \ c=5 $ and $d=25$.  For the linear extension
$E: a \prec c \prec d \prec b$ of $Q_L$, 
the chain of points in $J(Q_L)$ colored red  in the proof of Theorem~\ref{distrib-lattice-thm}
is $ a \prec ac \prec acd $ and the remaining vertices are green.   Observe that each join-irreducible point in $J(Q_L)$ except $b$ is indeed less than or equal to  a unique lowest red point:  $a \preceq a $ (rank 1), \ 
$c \preceq ac $ (rank 2), \ 
$cd \preceq acd $ (rank 3).  

}
\label{example-birkhoff}
\end{example}

\begin{example}
{\rm For the distributive lattice in Figure~\ref{fig-distrib-lat}, the set of join-irreducible points is $Y = \{a,b,c,d\}$, where $a = w_1$, $b = w_4$, $c = w_5$, and $d = w_2$.  For the linear extension 
$E: d \prec a \prec b \prec c$ of $Q_L$, 
the chain of points in $J(Q_L)$ colored red  in the proof of Theorem~\ref{distrib-lattice-thm} is $ d \prec ad \prec abd $ and the remaining vertices are green.   Observe that each  join-irreducible point of $J(Q_L)$  except $c$ is indeed less than or equal to a unique lowest red point:  $a \preceq ad $ (rank 2), \ 
$ab \preceq abd $ (rank 3), \ 
$d \preceq d $ (rank 1).  
Each point of $J(Q_L)$   is the join of a unique set of join-irreducible points of $J(Q_L)$, for example, point $acd$ is the join of $a,d,ac$.

}
\label{example-birkhoff-2}
\end{example}

Our proof of Theorem~\ref{distrib-lattice-thm}
provides a distinguishing coloring of $L$ for each linear extension of $Q_L$.  We record this in Corollary~\ref{count-coloring-cor}.

\begin{Cor}
For any distributive lattice $L$, each linear extension of $Q_L$ leads to a distinguishing coloring of $L$ using two colors, one of which appears on exactly $|Q_L| - 1$ points.
\label{count-coloring-cor}
\end{Cor}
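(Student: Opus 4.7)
The plan is to extract this corollary directly from the construction used in the proof of Theorem~\ref{distrib-lattice-thm}, since that proof already does all of the substantive work. The argument there starts from an arbitrary linear extension $E: y_1 \prec y_2 \prec \cdots \prec y_t$ of $Q_L$ and builds an explicit 2-coloring of $J(Q_L) \cong L$ which is shown to be distinguishing. So I would begin by noting that the choice of $E$ was not special: the distinguishing property of the resulting coloring depends only on $E$ being a linear extension, so indeed every linear extension of $Q_L$ yields a distinguishing 2-coloring of $L$.

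The only additional content in the corollary is the count of how many points receive one of the two colors. I would point out that the proof of Theorem~\ref{distrib-lattice-thm} colors red precisely the chain
\[\{f(y_1)\}, \ \{f(y_1),f(y_2)\}, \ \ldots, \ \{f(y_1),f(y_2),\ldots,f(y_{t-1})\}\]
in $J(Q_L)$. These downsets have cardinalities $1, 2, \ldots, t-1$, so they are pairwise distinct, and there are exactly $t-1 = |Q_L|-1$ of them. Therefore exactly $|Q_L|-1$ points of $L$ receive the color red and all remaining points receive green, which is the claim.

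There is really no obstacle here; the corollary is a bookkeeping observation on top of the theorem. If anything, the only thing to be careful about is noting that the red points are genuinely distinct (so the count is $t-1$ and not smaller), which is immediate from their differing cardinalities as downsets, and noting that the coloring uses both colors nontrivially (the rank-$t$ top element $f(y_1)\cup\cdots\cup f(y_t)$ is green, so green is used), so the coloring really is a 2-coloring rather than a 1-coloring.
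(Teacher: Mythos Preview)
Your proposal is correct and matches the paper's own treatment exactly: the paper simply remarks that the proof of Theorem~\ref{distrib-lattice-thm} already furnishes, for each linear extension of $Q_L$, a distinguishing 2-coloring with red class of size $|Q_L|-1$, and records this as the corollary without further argument. Your added observation that the red downsets are pairwise distinct by cardinality is a nice explicit justification of the count.
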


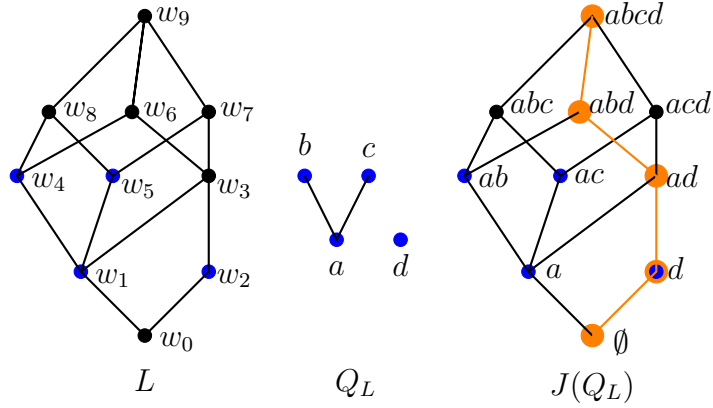
\begin{figure}[h]

\begin{center}
\begin{tikzpicture}[scale=.85]
\filldraw[black]

(-.5,2) circle [radius=3pt]
(1,3.5) circle [radius=3pt]
(2,2) circle [radius=3pt]

(1,-1.5) circle [radius=3pt]
(2,1) circle [radius=3pt]
(.8,2) circle [radius=3pt]
;
\filldraw[blue]
(0,-.5) circle [radius=3pt]
(2,-.5) circle [radius=3pt]
(-1,1) circle [radius=3pt]
(.5,1) circle [radius=3pt]
;

\draw[thick] (-.5,2)--(1,3.5)--(2,2)--(.5,1)--(-.5,2)--(-1,1) -- (0,-.5)--(.5,1);
\draw[thick] (1,-1.5)--(0,-.5)--(2,1)--(.8,2)--(1,3.5)--(.8,2)--(-1,1);
\draw[thick](1,-1.5)--(2,-.5)--(2,1)--(2,2);
\node(0) at (1, -2.2) {$L$};

\node(0) at (1.5,-1.6) {$w_0$};
\node(0) at (.5,-.6) {$w_1$};
\node(0) at (2.4,-.6) {$w_2$};
\node(0) at (-.5,.9) {$w_4$};
\node(0) at (2.4,.8) {$w_3$};
\node(0) at (.9,.8) {$w_5$};
\node(0) at (1.25,2) {$w_6$};
\node(0) at (2.45,2) {$w_7$};
\node(0) at (0,2) {$w_8$};
\node(0) at (1.45,3.5) {$w_9$};

\filldraw[blue]
(4,0) circle [radius=3pt]
(5,0) circle [radius=3pt]
(3.5,1) circle [radius=3pt]
(4.5,1) circle [radius=3pt]
;
\draw[thick] (3.5,1)--(4,0)--(4.5,1);
\node(0) at (4.3,-2.25) {$Q_L$};
\node(0) at (4,-.45) {$a$};
\node(0) at (5,-.4) {$d$};
\node(0) at (3.5,1.5) {$b$};
\node(0) at (4.5,1.4) {$c$};

\filldraw[black]
(6.5,2) circle [radius=3pt]
(8,3.5) circle [radius=3pt]
(9,2) circle [radius=3pt]

(8,-1.5) circle [radius=3pt]
(9,1) circle [radius=3pt]
(7.8,2) circle [radius=3pt]
;
\filldraw[orange]
(8,-1.5) circle [radius=5pt]
(9,-.5) circle [radius=5pt]
(9,1) circle [radius=5pt]
(7.8,2) circle [radius=5pt]
(8,3.5) circle [radius=5pt]
;
\filldraw[blue]
(7,-.5) circle [radius=3pt]
(9,-.5) circle [radius=3pt]
(6,1) circle [radius=3pt]
(7.5,1) circle [radius=3pt]
;

\draw[thick] (6.5,2)--(8,3.5)--(9,2)--(7.5,1)--(6.5,2)--(6,1) -- (7,-.5)--(7.5,1);
\draw[thick] (8,-1.5)--(7,-.5)--(9,1);
\draw[thick] (7.8,2)--(6,1);
\draw[thick] (9,1)--(9,2);
\draw[thick,orange] (8,-1.5)--(9,-.5)--(9,1)--(7.8,2)--(8,3.5);
\node(0) at (8, -2.3) {$J(Q_L)$};

\node(0) at (8.45,-1.6) {$\emptyset$};
\node(0) at (7.4,-.5) {$a$};
\node(0) at (9.3,-.45) {$d$};
\node(0) at (6.45,1) {$ab$};
\node(0) at (9.4,1) {$ad$};
\node(0) at (7.95,1) {$ac$};
\node(0) at (8.3,2.15) {$abd$};
\node(0) at (9.5,2.15) {$acd$};
\node(0) at (7.05,2.15) {$abc$};
\node(0) at (8.65,3.55) {$abcd$};

\end{tikzpicture}

\end{center}
\caption{A lattice $L$, its poset $Q_L$ of join-irreducibles, and the downset lattice $J(Q_L)$, together with a distinguishing coloring of $J(Q_L)$.}
\label{fig-distrib-lat}

\end{figure}

\subsection{Divisibility lattices}

Divisibility lattices form a subset of the set of distributive lattices. The meet of two integers is their greatest common divisor and their join is their least common multiple. For positive integer $n$, the  \emph{divisibility lattice} is the poset $L_n$ consisting of the positive integer divisors of $n$ ordered by divisibility.  Figure~\ref{ex-fig} shows the poset $L_n$ for $n=pq^2$ and $n=p^2q^2$ when $p$ and $q$ are distinct primes.  As illustrated  by this figure, the structure of $L_n$ is determined by the prime factorization of $n$.  

Let $n = p_1^{a_1} p_2^{a_2}  \cdots p_k^{a_k} $ where the $p_i$ are distinct primes and each $a_i$ is a positive integer.  
It is straightforward to check that if $f$ is an automorphism of divisibility lattice $L_n$ and $f(p_i) = p_j$ then $a_i = a_j$. The join-irreducible elements of $L_n$ are the factors of $n$ of the form $p_i^{b_i}$. 
When Theorem~\ref{distrib-lattice-thm} is translated to divisibility lattices, we can say precisely when $D(L)=1$ and when $D(L)=2$.

\begin{Thm}
Let $n$ be an integer greater than 1 and write $n = p_1^{a_1} p_2^{a_2}  \cdots p_k^{a_k} $ where $p_i$ are distinct primes and $a_i \ge 1$ for each $i$.  The divisibility lattice $L_n$ has  $D(L_n) = 1$ if the $a_i$ are distinct and $D(L_n)= 2 $ otherwise.
\label{divis-lat-thm}
\end{Thm}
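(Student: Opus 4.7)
The plan is to apply Theorems~\ref{D-one-thm} and~\ref{distrib-lattice-thm} directly. Since $L_n$ is a distributive lattice, Theorem~\ref{distrib-lattice-thm} already gives $D(L_n) \le 2$, and reduces the question to determining when $D(Q_{L_n}) = 1$, that is, when $Q_{L_n}$ has no non-trivial automorphism. So the entire task is to analyze the structure of $Q_{L_n}$ and its automorphism group.

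First I would identify $Q_{L_n}$ explicitly. The join-irreducibles of $L_n$ are precisely the prime powers $p_i^{b}$ with $1 \le b \le a_i$, because a divisor $m$ of $n$ has a unique element covered by it in the Hasse diagram exactly when $m$ is a prime power (any divisor involving two distinct primes $p_i, p_j$ is covered by both $m/p_i$ and $m/p_j$). Under the divisibility order, two prime powers $p_i^{b}$ and $p_j^{c}$ are comparable if and only if $i = j$. Hence $Q_{L_n}$ is the disjoint sum of $k$ chains, where the $i$-th chain has exactly $a_i$ points.

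Next I would analyze the automorphisms of this sum of chains. Any automorphism of a disjoint union of chains must send each chain to a chain of the same length (and, on each chain, is forced to be the identity since a finite chain is rigid). Thus $Q_{L_n}$ has a non-trivial automorphism if and only if two of the exponents $a_i, a_j$ coincide, in which case swapping those two chains is a non-trivial automorphism. Conversely, if the $a_i$ are pairwise distinct then $Q_{L_n}$ has no non-trivial automorphism, so $D(Q_{L_n}) = 1$.

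Combining these observations with Theorem~\ref{D-one-thm} gives $D(L_n) = 1$ when the $a_i$ are distinct, and combining with Theorem~\ref{distrib-lattice-thm} gives $D(L_n) = 2$ otherwise. There is no real obstacle here; the only step requiring a brief justification is the characterization of join-irreducibles as prime powers, which follows immediately from the definition together with the covering structure of the divisibility lattice.
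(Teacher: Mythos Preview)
Your proposal is correct and matches the paper's own approach: the paper identifies the join-irreducibles of $L_n$ as the prime powers $p_i^{b_i}$, notes that $Q_{L_n}$ is a disjoint union of chains of lengths $a_1,\ldots,a_k$, and then states Theorem~\ref{divis-lat-thm} as an immediate translation of Theorems~\ref{D-one-thm} and~\ref{distrib-lattice-thm}. Your write-up is in fact slightly more explicit than the paper about why $Q_{L_n}$ is rigid exactly when the $a_i$ are distinct, but the route is the same.
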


When $D(L_n) = 2$, the coloring in the proof of Theorem~\ref{distrib-lattice-thm} does not always use a minimum number of red vertices, as seen in the following example.

\begin{example}{\rm
Consider the divisibility lattice $L_n$ where $n = 2310 = 2 \cdot 3 \cdot 5 \cdot 7 \cdot 11$. The join-irreducibles of $L_{2310}$ are the primes $2,3,5,7$ and 11. Each of these points has rank 1.
 Thus, in the proof of Theorem~\ref{divis-lat-thm}, the points colored red are the four in the chain 
  $2 \prec 2\cdot 3 \prec 2 \cdot 3 \cdot 5 \prec 2 \cdot 3 \cdot 5 \cdot 7 $, while the remaining vertices are colored green.

Instead, we could color the three points $5\cdot 7$, \ $ 7 \cdot 11$,\   $3 \cdot 5 \cdot 7$ red and the remaining points green.  Each of the rank 1 points is pinned as follows:  2 is pinned since it is the only rank one point not below any red point, 3 is pinned since it below a rank 3 red point and no others, 5 is pinned since it is below one rank 2 red point and one rank 3 red point,  7 is pinned since it is below all three red points, and 11 is pinned since it is below one rank 2 red point and no others.}

\end{example}

\section{Distinguishing Chromatic Number}\label{sect-dist-chr}


The distinguishing chromatic number $\chi_D(G)$ of a graph $G$  was introduced in \cite{CoTr06} and studied further by other authors, see for example \cite{BaPa17-a, BaPa17-b, CoHoTr09,ImKaPiSh17}. 
It is defined as the minimum number of colors needed to properly  color the vertices of $G$ so that the only automorphism that preserves colors is the identity.  
We next  define  an analogous parameter for posets.

\begin{Def}{\rm  A coloring of the points of poset $P$ is \emph{proper} if comparable points are assigned different colors, that is, each color class induces an antichain.  The \emph{distinguishing chromatic number} of poset $P$, denoted $\chi_D(P)$, is the least integer $k$ for which there is a coloring of $P$ that is both proper and distinguishing.
}
\end{Def}

For example, for the poset $M$ in Figure~\ref{ex-fig}, $\chi_D(M)=4$ and one proper distinguishing coloring is the following:  color 1 for $z$, color 2 for  $y$ and $w$,  color 3 for  $x$, and  and color 4 for  $v$. 

The fact that any automorphism of  a  poset $P$ is also an automorphism of its comparability graph $G_P$ justifies the following remark.
\begin{remark}  

$\chi_D(P) \le \chi_D(G_P)$.
\end{remark}
 
However,  some automorphisms of the graph  $G_P$ are not automorphisms of the poset $P$ because they do not preserve the ordering of points in $P$.    
The following result  shows that $\chi_D(P)$  can differ significantly from  $\chi_D(G_P)$. 

\begin{Prop}
 There exist posets $P$  for which the gap between $\chi_D(P)$  and   $\chi_D(G_P)$ is arbitrarily large.
\end{Prop}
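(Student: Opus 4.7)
The plan is to construct, for each integer $m \geq 2$, a poset $P_m$ with $\chi_D(P_m) = m$ while $\chi_D(G_{P_m}) - m$ grows without bound in $m$. The intended family is simple: let $P_m$ be the disjoint union of $k = m(m-1)$ copies of the two-element chain, so that $G_{P_m} = k K_2$. The key heuristic is that poset automorphisms of $P_m$ permute the $k$ chains rigidly (bottoms stay bottoms), while graph automorphisms of $k K_2$ additionally allow endpoint swaps within each edge, and this extra freedom forces the graph to need more colors.

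First I would compute $\chi_D(P_m)$. A proper coloring assigns each 2-chain an ordered pair (bottom color, top color) of distinct colors, and order preservation forces any automorphism to respect each ordered pair. Hence the coloring is distinguishing precisely when the $k$ ordered pairs are pairwise distinct, and with $c$ colors there are $c(c-1)$ such pairs. The minimum $c$ is therefore the smallest integer with $c(c-1) \geq k$, which for $k = m(m-1)$ gives $\chi_D(P_m) = m$. Next I would compute $\chi_D(G_{P_m})$. In $k K_2$ a color-preserving automorphism may permute edges and simultaneously swap endpoints; the within-edge swap of a single edge is killed by properness, but a pair of edges sharing the same \emph{unordered} color pair $\{a_i,b_i\}$ always admits a nontrivial color-preserving swap (one picks the fix/swap option on each of the two edges so that the colors match). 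Hence $\chi_D(G_{P_m})$ is the smallest $c$ with $\binom{c}{2} \geq k$, equivalently $c(c-1) \geq 2m(m-1)$, so $\chi_D(G_{P_m}) \sim \sqrt{2}\,m$ and the gap $\chi_D(G_{P_m}) - \chi_D(P_m) \geq (\sqrt{2}-1)m - O(1)$ is unbounded.

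The main obstacle, though modest, is the ordered-versus-unordered distinction for the pairs assigned to chains/edges. What must be articulated carefully is that for the graph $k K_2$, any coincidence of two edges' unordered color pairs can be realized as a genuine graph automorphism of the whole graph (fix/swap on those two edges, identity elsewhere), while no analogous phenomenon is available for the poset because order preservation pins down which endpoint is the bottom. Once this is in place, the counting inequalities $c(c-1) \geq k$ for the poset and $c(c-1) \geq 2k$ for the graph follow immediately, and the required unbounded gap is extracted by choosing $m$ large.
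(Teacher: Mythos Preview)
Your proof is correct and uses the same underlying construction as the paper---a disjoint union of equal-length chains---but with a different parametrization. The paper takes $\binom{2j}{j}$ copies of a $j$-chain and lets $j$ grow; this makes $\chi_D(G_P)=2j$ exactly (one needs $\binom{c}{j}\ge \binom{2j}{j}$ unordered $j$-sets of colors), while $\chi_D(P)\le j+1$ because ordered $j$-tuples satisfy $(j+1)_j\ge \binom{2j}{j}$, giving a gap of at least $j-1$. You instead fix the chain length at $2$ and let the number $k=m(m-1)$ of chains grow, reducing the comparison to ordered pairs versus unordered pairs: $\chi_D(P_m)$ is the least $c$ with $c(c-1)\ge k$ and $\chi_D(G_{P_m})$ the least $c$ with $\binom{c}{2}\ge k$. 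Your version is a bit more elementary---the ordered/unordered distinction is completely transparent for $2$-chains and no inductive estimate like the paper's $(j+1)_j\ge\binom{2j}{j}$ is needed---at the cost of a slightly less clean final gap ($(\sqrt{2}-1)m-O(1)$ rather than an integer $j-1$). Either choice proves the proposition.
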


\begin{proof}

 Let $j\geq 3$ be  a positive integer and $P$ be the poset consisting of  $\binom{2j}{j}$ disjoint $j$-chains. The   graph $G_P$ consists of   $\binom{2j}{j}$ copies of the graph $K_j$,   and thus  $\chi_D(G_P) = 2j$. However, $\chi_D(P)=t $, where $t$  is the smallest integer   such that $(t)_j=t(t-1)(t-2)\ldots (t-j+1)\geq \binom{2j}{j}$.  
We will show that $\chi_D(P) \le j+1$, demonstrating that the gap between $\chi_D(G_P)$ and $\chi_D(P)$ can be made arbitrarily large.

It remains to show $t \le j+1$.   For the initial value $j=3$, we have $(j+1)_j  = 4\cdot 3\cdot 2 = 24>  20 = \binom{6}{3} = \binom{2j}{j}$, so $t \le j+1$.   Observe that $$\binom{2j+2}{j+1} = \frac{2(j+1)(2j+1)}{(j+1)^2}\binom{2j}{j}<4 \binom{2j}{j}$$ whereas $(t)_{j+1} = (j+1)(t)_j$, so 
$(t)_j$ grows at a faster rate than $\binom{2j}{j}$ for $j \ge 3$.  Thus  $t \le j+1$ for $j \ge 3$ as desired.

\end{proof}
  
   \noindent
{\bf Application:}  \ The definition of $\chi_D(P)$ is related to the following problem of designing a student's course schedule. Form a poset $P$ in which the points of $P$ are the courses a student plans to take to complete a major, and $x\prec y$ if course $x$ is a prerequisite for course $y$. In a proper coloring, if two courses receive the same color, neither is a prerequisite of the other and they can be taken in the same semester. The minimum number of colors needed for a proper coloring of $P$ is the minimum number of semesters needed to complete the major. If the coloring is also distinguishing, then $P$ together with its coloring will uniquely identify the courses as well as specify which ones are taken in which semester.

The next result  can be used to determine the distinguishing chromatic number of posets consisting of the sum of chains.
 We denote the falling factorial as   $(k)_r = k(k-1)(k-2) \cdots (k-r+1)$.  
  
  \begin{Prop}
 (i) If $P$ is the poset consisting of the sum  of $t$ chains  in which each chain contains $r$ elements, and    $k$ is the positive integer for which $(k-1)_r < t \le (k)_r$, then $\chi_D(P) = k$.  
  
 (ii) Let $P$ be the sum of chains and partition $P$ as $P_1 + P_2 + \cdots  + P_m$ where $P_i$ consists of $t_i$ chains, each consisting of $r_i$ points, where     $r_1, r_2, \cdots , r_m$ are distinct.  Then $\chi_D(P) = \max \{\chi_D(P_i): 1 \le i \le m\}$.
    \label{chi-chains-prop}
    \end{Prop}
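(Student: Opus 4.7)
The plan is to mirror the proofs of Proposition~\ref{chains-lem} and Proposition~\ref{chains-prop}, with the count $k^r$ replaced throughout by the falling factorial $(k)_r$, since properness forces the colors on any $r$-chain to be distinct.

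For part (i), I would first establish the upper bound $\chi_D(P) \le k$ by exhibiting an explicit coloring. With $k$ colors available, the number of proper colorings of a single $r$-chain is exactly $(k)_r$, because the bottom point has $k$ choices, the next has $k-1$, and so on, all colors along the chain being forced distinct by properness. Since $t \le (k)_r$, we may assign a distinct proper coloring to each of the $t$ chains. Any automorphism of $P$ permutes the chains (chains are the connected components of the comparability graph), and a color-preserving automorphism must send each chain to one with the same multiset of colors appearing at the same ranks; since the colorings are pairwise distinct, each chain is mapped to itself. Within a single chain the colors are all distinct and an automorphism of a chain preserves its linear order, so every point is fixed. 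Hence the coloring is proper and distinguishing.

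For the lower bound $\chi_D(P) > k-1$, I would argue by contradiction: any proper coloring of $P$ using $k-1$ colors induces a proper coloring of each of the $t$ chains, and there are only $(k-1)_r$ such colorings. Since $t > (k-1)_r$, the pigeonhole principle produces two chains with identical colorings, and the automorphism swapping them (while fixing all other points) is nontrivial and color-preserving, so the coloring is not distinguishing. Combining the two bounds gives $\chi_D(P)=k$.

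For part (ii), the key observation is that any automorphism of $P$ maps each $P_i$ to itself, because a chain of length $r_i$ can only be mapped to another chain of length $r_i$, and the values $r_1,\dots,r_m$ are distinct. Consequently, a proper distinguishing coloring of $P$ restricts to a proper distinguishing coloring of each $P_i$, and conversely any collection of proper distinguishing colorings of the $P_i$ (using disjoint or even overlapping color sets, since comparability exists only within a chain) assembles to a proper distinguishing coloring of $P$ by reusing the same palette across the parts. Therefore $\chi_D(P)=\max_i \chi_D(P_i)$.

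There is no real obstacle here; the only subtlety, compared with Proposition~\ref{chains-lem}, is remembering that properness reduces the count of admissible colorings per chain from $k^r$ to $(k)_r$, and verifying that within a chain distinct colors plus order-preservation suffice to pin every point.
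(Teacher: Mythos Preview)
Your proposal is correct and follows exactly the approach the paper takes: it simply adapts the proofs of Propositions~\ref{chains-lem} and~\ref{chains-prop} by replacing the count $k^r$ of arbitrary colorings of an $r$-chain with the falling factorial $(k)_r$ of proper colorings. Your write-up is more detailed than the paper's (which just points to the earlier arguments), but the strategy and all the ingredients are identical.
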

    
\begin{proof}
   Use the arguments given in the proofs of Propositions~\ref{chains-lem}  and \ref{chains-prop},
     except here the vertices of a chain must get different colors, so there are $(k)_r$ ways to color a chain of $r$ points if there are $k$ colors available.
\end{proof}

An alternate way to properly color the points of a poset is to color two points distinctly if they are \emph{incomparable}, or equivalently, so that each color class induces a chain. We call this a \emph{chain-proper} coloring.  A coloring that is both chain-proper and distinguishing
 is related to the following problem of assigning rooms to a set of scheduled events.

{\bf Application:} Represent a set of events as a poset $P$ in which the events are the points of $P$ and $x\prec y$ if event $x$ ends before event $y$ begins. In a chain-proper coloring, each color class is a set of events that can be assigned to the same room, and thus the minimum number of color classes is the number of rooms needed to schedule all of the events. If the coloring is distinguishing as well as proper, then the poset together with its coloring will uniquely identify the events as well as specifying which room each would occupy. 


As an example,  the poset $M$ in Figure~\ref{ex-fig}, requires two colors for a chain-proper coloring that is distinguishing:  color $x$, $y$, and $z$ red, and color $v$ and $w$ blue.    
   The next proposition is a lovely consequence of Dilworth's theorem.

\begin{Prop} For any poset $P$, the minimum number of colors needed for a chain-proper coloring that is also distinguishing is  the width of $P$.  
\label{chi-D-thm}
\end{Prop}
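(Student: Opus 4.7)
The plan is a direct two-sided bound. Let $w$ denote the width of $P$, i.e.\ the size of a maximum antichain.

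For the lower bound, I would argue that $w$ colors are necessary in any chain-proper coloring, distinguishing or not. Fix a maximum antichain $A \subseteq P$ with $|A| = w$. Since the elements of $A$ are pairwise incomparable, no two of them may lie in the same color class of a chain-proper coloring (each color class is required to be a chain, hence totally ordered). Thus the $w$ elements of $A$ must receive $w$ distinct colors, so any chain-proper coloring (in particular, any chain-proper distinguishing coloring) uses at least $w$ colors.

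For the upper bound, I would construct an explicit chain-proper distinguishing coloring using exactly $w$ colors via Dilworth's theorem. Dilworth's theorem guarantees a partition of the ground set of $P$ into chains $C_1, C_2, \ldots, C_w$. Assign color $i$ to every point of $C_i$. This coloring is chain-proper by construction. To show it is distinguishing, suppose $\sigma$ is a color-preserving automorphism of $P$. Because $\sigma$ preserves colors, it preserves each color class setwise, so $\sigma(C_i) = C_i$ for each $i$. Then $\sigma$ restricted to $C_i$ is an order-preserving bijection of the finite chain $C_i$ to itself, which forces $\sigma|_{C_i}$ to be the identity (the only order-automorphism of a finite chain is the identity). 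Since this holds for every $i$ and the $C_i$ partition $P$, the map $\sigma$ is the identity on $P$.

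Combining the two bounds gives equality. The only substantive input is Dilworth's theorem; the ``distinguishing'' half of the upper bound is essentially automatic once one notices that color classes being chains rigidifies the coloring completely. There is no real obstacle here — the main thing to get right is the clean observation that a finite chain has no nontrivial order-automorphism, which is what turns a Dilworth chain-decomposition into a distinguishing coloring for free.
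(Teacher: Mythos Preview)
Your proposal is correct and follows essentially the same approach as the paper: the lower bound from a maximum antichain, and the upper bound via a Dilworth chain decomposition colored by chain index. The only cosmetic difference is that the paper phrases the distinguishing step as the general observation that \emph{every} chain-proper coloring is distinguishing (since each point has a unique position within its color-class chain, and automorphisms preserve that position), whereas you verify this just for the Dilworth coloring; the underlying argument is identical.
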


\begin{proof}
Let $k$ be the width of $P$ and let $A$ be an antichain of $P$ with $|A| =k$.  Coloring the points of $A$ properly requires $k$ colors, hence at least $k$ colors are required.    To show that $k$ colors suffice,      use Dilworth's theorem to partition the points of $P$ into $k$ sets, each of which induces a chain in $P$.  Color all points on chain $i$ using color $i$ for $i = 1,2,3 \ldots, k$.  By definition, this coloring is proper.  Observe that all chain-proper colorings are distinguishing because   each point on chain $i$ has a unique height on that chain, and height is preserved by automorphisms.  \end{proof}

\subsection{Bounds for distributive lattices}

In our next result, we again use Birkhoff's Theorem, this time to relate the distinguishing  chromatic number of a distributive lattice to that  of its poset of join-irreducibles. Note that Lemma~\ref{chi-D-bound} is tight for $L_{pq}$ when $p$ and $q$ are distinct primes.  

\begin{lemma} If $L$ is a distributive lattice, then  $\chi_D(L) \le  \chi_D(Q_L) + |Q_L|$.
\label{chi-D-bound}
\end{lemma}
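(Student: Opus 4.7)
The plan is to build a proper distinguishing coloring of $L$ by stitching together two independently-designed pieces on disjoint color palettes: a proper distinguishing coloring of $Q_L$, and a rank-based coloring of the remaining points $R = L \setminus Q_L$.

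For the first piece, let $\psi : Q_L \to \{1, \dots, \chi_D(Q_L)\}$ be a proper distinguishing coloring, which exists by definition. For the second, I would color each $w \in R$ according to its rank $r(w)$ in $L$ (well-defined by Remark~\ref{rank-rem}), using a new palette of $|Q_L|$ colors $\{\chi_D(Q_L)+1, \dots, \chi_D(Q_L)+|Q_L|\}$. The only thing to check is that this palette is large enough: a priori ranks in $L$ range over $\{0, 1, \dots, |Q_L|\}$, a set of size $|Q_L|+1$. However, every rank-$1$ element $x$ of $L$ satisfies $x \succ \hat{0}$ with nothing strictly between, so $\hat{0}$ is the only element $x$ covers; hence $x$ has exactly one downward edge in the Hasse diagram and is join-irreducible. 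Thus no rank-$1$ element lies in $R$, the ranks occurring in $R$ fit inside a set of size $|Q_L|$, and a bijection with the new palette yields a color for each element of $R$.

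To verify that the combined coloring $\phi$ is proper, comparable elements of $Q_L$ are handled by $\psi$; comparable elements of $R$ have distinct ranks in $L$ and hence distinct colors; a mixed comparable pair receives colors from disjoint palettes. For the distinguishing property, any automorphism $\sigma$ of $L$ preserves join-irreducibility and so restricts to an automorphism of $Q_L$; if $\sigma$ preserves $\phi$ it preserves $\psi$ on $Q_L$, forcing $\sigma|_{Q_L}$ to be the identity. Proposition~\ref{tough-pin-prop} then yields that $\sigma$ is the identity on all of $L$.

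The only subtle step is the rank-$1$ observation, which saves exactly one color and makes the bound come out to $\chi_D(Q_L) + |Q_L|$ rather than $\chi_D(Q_L) + |Q_L| + 1$; everything else is bookkeeping on disjoint palettes together with an appeal to Proposition~\ref{tough-pin-prop}.
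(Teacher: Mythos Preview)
Your proof is correct and follows essentially the same approach as the paper: both combine a rank coloring on $L\setminus Q_L$ with a proper distinguishing coloring on $Q_L$ using disjoint palettes, invoke the observation that every rank-$1$ point is join-irreducible to save one color, and then appeal to Proposition~\ref{tough-pin-prop} (equivalently Corollary~\ref{tough-cor}) for the distinguishing property. The only cosmetic difference is that the paper first colors all of $L$ by rank and then recolors $Q_L$, whereas you color the two pieces separately from the start.
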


\begin{proof}
First color each point of $L$ at rank $j$ using color $j$ for $0 \le j \le |Q_L|$.   This provides a proper coloring of $L$  and by Remark~\ref{rank-rem},  it uses $|Q_L| + 1$ colors.    Next, recolor the points of $Q_L$ using a proper and distinguishing coloring with $\chi_D(Q_L)$ new colors.  The resulting coloring of $L$ is still proper.  It is also distinguishing since by Corollary~\ref{tough-cor}
 any point of $L$  can be written uniquely as the join of elements of $Q_L$, and automorphisms preserve joins.     All rank 1 points of $L$ are in $Q_L$, so the color 1 is never used in the final coloring, thus we have a proper and distinguishing coloring of $L$ using $ \chi_D(Q_L) + |Q_L|$ colors.
\end{proof}

The lattice $L$ in Figure~\ref{fig-distrib-lat} has $\chi_D(Q_L) = 3$ and $|Q_L| = 4$  The proof of Lemma~\ref{chi-D-bound} provides a proper and distinguishing coloring of $L$ using 7 colors.  We can show that $\chi_D(L) \ge 6$ as follows.  At least 5 colors are needed for a proper coloring, and any proper coloring using 5 colors  assigns the same color to  $w_4$ and $w_5$, and thus is not distinguishing.  The reverse inequality, $\chi_D(L) \le 6$, follows from our next theorem, and thus lattice $L$ is an example that shows  the bound in Theorem~\ref{chi-D-bound-minus} is tight.


\begin{Thm} If $L$ is a distributive lattice and $\chi_D(Q_L)\ge 3$, then $\chi_D(L)\leq |Q_L|+ \chi_D(Q_L) - 1$. 
\label{chi-D-bound-minus}
\end{Thm}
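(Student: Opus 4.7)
The plan is to modify the coloring from the proof of Lemma~\ref{chi-D-bound} so as to save exactly one color, using the extra flexibility afforded by the hypothesis $\chi_D(Q_L)\ge 3$. Let $\psi:Q_L\to\{1,\ldots,\chi_D(Q_L)\}$ be a proper distinguishing coloring of $Q_L$, and let $f:L\to J(Q_L)$ be the Birkhoff isomorphism of Theorem~\ref{birkhoff-thm}. The Lemma's construction colors each non-join-irreducible of $L$ at rank $j$ with a rank color $R_j$ and each join-irreducible $f(y)$ with the fresh color $\psi(y)$; since every rank-$1$ element of $L$ is a join-irreducible (namely $\{y\}$ for minimal $y\in Q_L$), the color $R_1$ is unused, so the result is a proper, distinguishing (by Proposition~\ref{tough-pin-prop}) coloring of $L$ with $|Q_L|+\chi_D(Q_L)$ colors.

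First I would observe the underlying principle: a rank color $R_j$ is absent from the palette precisely when every downset of $Q_L$ of size $j$ has a unique maximum, i.e., every rank-$j$ element of $L$ is a join-irreducible. I would then split into two cases according to whether $\hat{1}$ is a join-irreducible of $L$, equivalently whether $Q_L$ has a unique maximum. In the first case, letting $y_{\max}$ be the unique maximum of $Q_L$, the element $\hat{1}=f(y_{\max})$ is join-irreducible and receives $\psi(y_{\max})$ instead of $R_{|Q_L|}$; since $\hat{1}$ is the only rank-$|Q_L|$ element, $R_{|Q_L|}$ is also unused, so the Lemma's coloring immediately uses only $|Q_L|+\chi_D(Q_L)-1$ colors.

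In the remaining case $Q_L$ has at least two maxima, and I would save one color by recoloring the non-join-irreducibles of a carefully chosen rank $j^{*}\in\{2,\ldots,|Q_L|-1\}$ with colors already present in the palette, so as to eliminate $R_{j^{*}}$. A rank-$j^{*}$ non-join-irreducible $S$ is incomparable to every other rank-$j^{*}$ element, so its forbidden colors are only those of elements strictly below or above $S$: the rank colors $R_0$ and $R_{|Q_L|}$, the rank colors $R_j$ of non-join-irreducibles at ranks $j\neq j^{*}$ comparable to $S$, and the fresh colors $\psi(y)$ for those $y\in Q_L$ with $f(y)\subseteq S$ or $f(y)\supseteq S$. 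With $\chi_D(Q_L)\ge 3$, this forbidden set does not exhaust the palette, and I could assign $S$ either a fresh color $\psi(y)$ for some $y\in Q_L$ with $f(y)$ incomparable to $S$, or a rank color borrowed from an incomparable non-join-irreducible at some rank $j\neq j^{*}$. After this simultaneous recoloring, the coloring remains proper by construction and remains distinguishing by Proposition~\ref{tough-pin-prop}, since the restriction to $Q_L$ is still $\psi$.

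The main obstacle is verifying, in this second case, that such a rank $j^{*}$ can always be selected so that the recoloring succeeds at every rank-$j^{*}$ non-join-irreducible simultaneously. The two natural candidates are $j^{*}=|Q_L|-1$ (where the non-join-irreducibles are exactly $Q_L\setminus\{y^{*}\}$ for maximal $y^{*}\in Q_L$, which have only $\hat{1}$ above them and can often take the color $\psi(y^{*})$ whenever $\psi(y^{*})$ is not repeated on $Q_L\setminus\{y^{*}\}$) and $j^{*}=2$ (where the non-join-irreducibles are $\{y_1,y_2\}$ for incomparable minima $y_1,y_2$, with forbidden $\psi$-values limited to $\{\psi(y_1),\psi(y_2)\}\cup\psi(U(y_1,y_2))$, where $U$ denotes common upper bounds in $Q_L$). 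A careful combinatorial case analysis, varying the coloring $\psi$ when needed and exploiting $\chi_D(Q_L)\ge 3$, will show that at least one of these choices always yields a valid recoloring; that case analysis is where I expect most of the work to lie.
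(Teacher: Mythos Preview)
Your Case 1 is correct and is a clean observation. The gap is in Case 2: the deferred ``careful combinatorial case analysis'' does not merely need filling in --- it fails. Take $Q_L$ to be the $3$-crown with minimal elements $a,b,c$, maximal elements $A,B,C$, and $A>b,c$, $B>a,c$, $C>a,b$. Here $\chi_D(Q_L)=3$ and, up to permuting colors, the unique proper distinguishing $3$-coloring is $\psi(a)=\psi(A)=1$, $\psi(b)=\psi(B)=2$, $\psi(c)=\psi(C)=3$. In $L=J(Q_L)$, \emph{every} non-join-irreducible is comparable to join-irreducibles of all three $\psi$-colors (for instance $\{a,b\}$ sits above $\{a\},\{b\}$ and below $f(C)$), so no non-join-irreducible can ever borrow a fresh color. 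Meanwhile the non-join-irreducibles contain the $6$-chain $\emptyset\prec\{a,b\}\prec\{a,b,c\}\prec\{a,b,c,A\}\prec\{a,b,c,A,B\}\prec\hat{1}$, so they need six distinct rank colors among themselves. Thus any coloring that leaves $\psi$ intact on $Q_L$ uses at least $6+3=9$ colors, never reaching the target $|Q_L|+\chi_D(Q_L)-1=8$. In particular neither $j^{*}=2$ nor $j^{*}=|Q_L|-1$ (nor any other $j^{*}$) works, and ``varying $\psi$'' does not help since $\psi$ is essentially unique.

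The paper's proof resolves this by \emph{also} recoloring certain elements of $Q_L$ (those of rank $\ge 3$ lying in carefully defined sets $S_1,S_2,S_3$) with their rank color, which frees up the fresh colors $a_1,a_2,a_3$ to be placed on the rank-$2$ non-join-irreducibles via an explicit rule. In the crown example this recolors $A,B,C$ with rank color $3$, after which $\{a,b\},\{a,c\},\{b,c\}$ can safely receive $a_3,a_2,a_1$. Because the coloring of $Q_L$ itself is altered, Proposition~\ref{tough-pin-prop} no longer applies directly, and the paper must verify the distinguishing property by hand; that verification is where the hypothesis $\chi_D(Q_L)\ge 3$ is actually used. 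Your instinct to target rank $2$ is right, but the missing idea is that you must be willing to disturb $Q_L$.
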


\begin{proof} 
Let $d=\chi_D(Q_L)$ and let $\phi$ be a proper and distinguishing coloring  of 
$Q_L$, using  the colors  in the set $A = \{a_1, a_2, \ldots, a_d\}$.   Let $A_i$ be the set of points in $Q_L$ with color $a_i$ for $1 \le i \le d$, that is $A_i = \{x \in Q_L: \phi(x) = a_i\}$.   We use a different set of colors, the \emph{rank colors},  for the remaining points of $L$.   For each point $x$ in $L$, let $r(x)$ be its rank   in $L$.  As before, each uncolored point $x$ in $L$ has    $r(x) $  in the set $R =  \{0,2,3,4, \ldots, |Q_L|\}$, and we color point $x$ using color  $r(x)$.
As before, this  is a  proper and distinguishing coloring of $L$ using  $|Q_L|+d$ colors. We will 
construct a new coloring that uses one fewer color by eliminating the color 2.

We define three subsets of points of $Q_L$ as follows.

\[S_1 = \{z \in A_1:   r(z) \ge 3\}\]

\[S_2 = \{z \in A_2 :   r(z) \ge 3 \hbox { and }  \exists \  y \in A_1 \hbox{ with }  r(y) = 1 \hbox { and }  \ y \prec z\}   \]

\[S_3 = \{z \in A_3:    r(z) \ge 3 \hbox { and }  \exists \  y \in A_1 \hbox{ and } \exists \ w  \in A_2 \hbox{ with }\\  r(y)  =  r(w) = 1, \hbox { and }   \ y,w \prec z \} \]

Define a new coloring $\psi$  on points $x$ of  $Q_L$ as follows. 

\begin{equation*}
\psi(x) = 
\begin{cases} 
r(x) &  \text{if }  x\in S_1\cup S_2\cup S_3\\ 

\phi(x)  & \text{otherwise.}\\
\end{cases}
\end{equation*}

The coloring $\psi$ is proper on the points of $Q_L$ because $\phi$ gives a proper coloring of the points not in $S_1 \cup S_2 \cup S_3$,     the rank  function gives a proper coloring of the points  in $S_1 \cup S_2 \cup S_3$, and the  colors in $R$  are different from the   colors in $A$.    We next show that $\psi$ is a distinguishing coloring of $Q_L$.    Let $h$ be an automorphism of $Q_L$ that preserves the coloring $\psi$.  

First we show that $h(S_1) = S_1$, $h(S_2) = S_2$ and $h(S_3) = S_3$, that is, $h$ preserves membership in each of the sets $S_1$, $S_2$, $S_3$.  For $z \in S_1$ we have $\psi(z) = r(z)$, so $h(z)$ has a color in  the set $ R$   and thus $h(z) \in S_1 \cup S_2 \cup S_3$.  
By the definition of $S_1$, we know  $\phi(z) = a_1$, and because $\phi$ is a proper coloring,  the point $z$ is incomparable to all other points of color $a_1$. Each point in $S_2\cup S_3$ is comparable to a rank 1 point with color $a_1$. Since $h$ preserves coloring  $\psi$, we know  $h(z) \not\in S_2\cup S_3$.  Hence $h(z)\in S_1$, and $h(S_1) = S_1$. 
For $z\in S_2$, we have  $\psi(z) = r(z) $, so  $h(z)$  also has a color in  the set $ R$,   and thus $h(z) \in S_1 \cup S_2 \cup S_3$. However, $h$ is an automorphism and  $h(S_1) = S_1$, hence  $h(z) \in S_2\cup S_3$. Each point in $S_3$ is comparable to a rank 1 point with color $a_2$, and since $h$ preserves $\psi$,  we know  $h(z)\not \in S_3$. Hence $h(z)\in S_2$ and $h(S_2)=S_2$. Similarly, each point in $S_3$ has a  color in $R$ and thus $h(S_3)=S_3$ as well. 

 Since $h$ preserves $\psi$ and  $h(S_i)=S_i$, then  $h(A_i-S_i)=A_i-S_i$ and  $h(A_i)=A_i$ for $1\leq i\leq 3$. We know  $h(A_i)=A_i$ for $i\geq 4$ because $h$ preserves $\psi$. Thus, $\psi$ preserves $\phi$. 
Since $\phi$ is a distinguishing coloring of $Q_L$, we conclude that  $h$ is the identity automorphism. Thus we have shown that  $\psi$ is a distinguishing coloring of $Q_L$. 

Now we extend $\psi$ to $L$. For each pair of distinct rank 1 points $x,y$ of $Q_L$,  define 

\begin{equation*}
g(x,y) = 
\begin{cases} 
a_1 & $\mbox{if $a_1\not \in \{ \phi(x),\phi(y)\}$  }$\\
a_2 &$\mbox{if $a_1\in \{ \phi(x),\phi(y)\}$  and $a_2\not\in \{ \phi(x),\phi(y)\}$ }$\\
a_3 &\{ \phi(x),\phi(y)\} = \{a_1, a_2\}\\
\end{cases}
\end{equation*}

We now extend $\psi$ to the elements of $L$ that are not in $Q_L$.   By Remark~\ref{rank-rem},
each rank 2 point in $L$ that is not in $Q_L$ covers exactly two rank 1 points of $L$.  This allows us to define $\psi(z)$  when $r(z) = 2$ and $z \not\in Q_L$ as follows.

\begin{equation*}
\psi(z) = 
\begin{cases} 
 r(z) & z\not\in Q_L \mbox{ and } r(z)>2 \mbox{ or } r(z)=0\\
g(x,y) & z\not \in Q_L,\;r(z)=2 \mbox{ and } z \mbox { covers } x,y. \\
\end{cases}
\end{equation*}

Observe that $\psi$ uses colors from the set $\{0,3,4,\ldots, |Q_L|\} \cup A$, for a total of $d+|Q_L|-1$ colors.
Since $\psi$ is distinguishing on $Q_L$, it is distinguishing on $L$. We need additionally to show that $\psi$ is proper on $L$. The color $j$, for $j=0,3,4,\ldots, |Q_L|$, is used only on points of rank $j$, so the use of the rank colors is proper. We need to show that the set of points  of $L$ colored $a_1$ by $\psi$  form an antichain, and similarly for  the points colored $a_2$ and the points colored $a_3$.  We partition the set of $z\in L$ with $r(z) =2$ as $T_1 \cup T_2 \cup T_3$ as follows, where the rank 1 points covered by  $z$ are denoted $x$ and $y$:

(i)  $z \in T_1$ if $a_1 \not\in \{ \phi(x), \phi(y)\}$

(ii)  $z \in T_2$ if $a_1 \in \{ \phi(x), \phi(y)\}$ and $a_2 \not\in \{ \phi(x), \phi(y)\}$

(iii)  $z \in T_3$ if $a_1,a_2 \in \{ \phi(x), \phi(y)\}$  

 \smallskip

Then the $a_i$-color class of $\psi$ is $T_i\cup (A_i-S_i)$, for $1\leq i\leq 3$. 
Since $\phi$ is proper, $A_i$ is an antichain, hence $A_i-S_i$ is an antichain. Since every point in $T_i$ has rank 2, $T_i$ is an antichain. 

Suppose that $z\in T_i$  and $w\in A_i-S_i$ are comparable.  Since $w\in Q_L$, $r(w)\neq 0$. Since $z$ covers only  $x,y$ and neither is in $A_i$, then $r(w)\neq 1$. Since $r(z)=2$, and points of the same rank are not comparable, then $r(w)\neq 2$. Hence $r(w)\geq 3$ and $w\succ z$. 

Case 1: Let $i=1$. Then $\psi(z)=a_1$ and $w\in A_1-S_1$, so $\psi(w)\neq a_1$ by the definition of $S_1$. Therefore, $\psi(z)\neq \psi(w)$. 

Case 2: Let $i=2$. Then $\psi(z)=a_2$ and $w\in A_2-S_2$. Thus $w$ is not above any rank 1 point colored $a_1$, but $z\in T_2$ and $z\succ x $ and $\psi(x)=a_1$. By transitivity of the order relation, this contradicts the assumption that $w $ and $z$ are comparable. 

Case 3: Let $i=3$. Then $\psi(z)=a_3$ and $w\in A_3-S_3$. Thus, $w$ cannot be above both a rank 1 point colored $a_1$ and a rank 1 point colored $a_2$, but $z\in T_3$ and $z\succ x $, $\psi(x)=a_1$ and $z\succ y$, $\psi(y)=a_2$. By transitivity of the order relation, this contradicts the assumption that $w $ and $z$ are comparable.

Thus sets of the points colored $a_1, a_2, a_3$, respectively are antichains, and $\psi$ is proper. Since we have shown it is distinguishing, $\chi_D(L)\leq d+|Q_L|-1$, as desired.

\end{proof}

Theorem \ref{chi-D-bound-minus} is tight, as we will see in Section~\ref{Boolean}. 

\subsection{Bounds for divisibility lattices}




In the next theorem, we provide an alternative bound in Theorem~\ref{chi-D-bound-minus} in the instance when the distributive lattice is a divisibility lattice.  We  begin by coloring each point by its rank, and then recolor the join-irredicuble points using the method in  Proposition~\ref{chi-chains-prop}.
Not all join-irredicuble points need to receive new colors, so we can use fewer colors than the number needed in Proposition~\ref{chi-chains-prop}.  Recall the falling factorial function is  $(n)_k= n(n-1)(n-2)\cdots (n-k+1)$ where $(n)_0 = 1$. 

\begin{Thm} \label{better-when-smaller}
Let $n = p_1^{a_1} p_2^{a_2}  \cdots p_k^{a_k} $ where the $p_i$ are distinct primes and each $a_i$ is a positive integer and  let   $Q = Q_{L_n}$.  Partition $Q$ as $Q_1 + Q_2 + \cdots  + Q_j$ where $Q_i$ consists of $t_i$ chains, each consisting of $r_i$ points, where     $r_1, r_2, \cdots , r_j$ are distinct. 
For $1\leq i\leq k$, let $m_i$ be the smallest integer such that $t_i \leq \sum_{\ell=0}^{\min\{r_i, m_i\}}(^{r_i}_{\ell}) (m_i)_{\ell}$. Let $m=\max \{m_i: 1 \le i \le j\}$. 
Then $\chi_D(L_n) \leq  m + |Q|+1 $.  
\end{Thm}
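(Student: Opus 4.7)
The plan is to adapt the proof of Lemma~\ref{chi-D-bound}. Start with the rank coloring of $L_n$, which uses the $|Q|+1$ colors $\{0, 1, \ldots, |Q|\}$ and is proper since the distributive lattice $L_n$ is ranked. Then recolor only the join-irreducibles, using a palette of $m$ new colors disjoint from the rank colors, arranging the recoloring so that the final coloring pins every point of $Q = Q_{L_n}$. By Proposition~\ref{tough-pin-prop}, pinning $Q$ automatically pins all of $L_n$, so the resulting coloring is distinguishing. The total color count is then $(|Q|+1) + m$, yielding the asserted bound.

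The structural input is that $Q$ is a disjoint union of chains, grouped into blocks $Q_1, \ldots, Q_j$ of pairwise distinct chain-lengths $r_i$, with $Q_i$ consisting of $t_i$ parallel $r_i$-chains. Any automorphism of $Q$ preserves chain length and therefore fixes each $Q_i$ setwise, permuting its $t_i$ chains (rank-preserving within each chain). Thus to pin $Q$ it suffices to equip the $t_i$ chains of each $Q_i$ with pairwise distinct coloring patterns. On a single $r_i$-chain, a valid pattern is obtained by choosing an $\ell$-subset of positions to recolor and assigning those positions distinct new colors, leaving the other positions with their rank colors; propriety on the chain is automatic because any two recolored points get distinct new colors and any two unrecolored points get distinct rank colors. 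The number of such patterns using at most $m_i$ new colors is exactly $\sum_{\ell=0}^{\min\{r_i, m_i\}} \binom{r_i}{\ell} (m_i)_\ell$, which by the defining minimality of $m_i$ is at least $t_i$; since $m \ge m_i$, the common $m$-color palette suffices for each block $Q_i$ simultaneously.

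With this recoloring in place, propriety on all of $L_n$ follows from three observations: a recolored join-irreducible shares no color with any other point whose color comes from the rank palette (disjoint palettes); any two comparable points that are both rank-colored have distinct ranks in the ranked lattice $L_n$; and within a single chain of $Q_i$ the pattern construction already guarantees propriety. For the distinguishing property, any color-preserving automorphism $h$ of $L_n$ preserves rank, hence maps $Q$ to $Q$ (join-irreducibility is order-theoretic) and preserves chain lengths, so it maps each $Q_i$ to itself; distinctness of the patterns within $Q_i$ forces $h$ to fix each chain setwise; and rank pins each point within its chain. Then Proposition~\ref{tough-pin-prop} promotes this to a pinning of $L_n$, and $\chi_D(L_n) \le m + |Q| + 1$ follows.

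The only point demanding real care is identifying the per-chain pattern count $\sum_{\ell=0}^{\min\{r_i,m_i\}} \binom{r_i}{\ell}(m_i)_\ell$ correctly — the summand $\binom{r_i}{\ell}(m_i)_\ell$ arising as the choice of $\ell$ positions to recolor times the number of injections from those positions into the $m_i$-color palette. Once this count is in hand, the rest of the argument is bookkeeping built on Lemma~\ref{chi-D-bound}, Proposition~\ref{tough-pin-prop}, and the fact that $Q$ is literally a disjoint union of chains of known lengths.
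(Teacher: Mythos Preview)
Your proposal is correct and follows essentially the same approach as the paper's proof: begin with the rank coloring on $L_n$, recolor a subset of each chain in $Q$ injectively from a disjoint palette of $m$ new colors so that the $t_i$ chains in each block $Q_i$ receive pairwise distinct patterns, then invoke Proposition~\ref{tough-pin-prop}. If anything, you are more explicit than the paper in checking propriety (noting that recolored points on the same chain receive distinct new colors, and that join-irreducibles on different chains are incomparable in $L_n$), but the argument is the same.
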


\begin{proof} 
We begin by coloring each point in $L_n$ with its rank. There are $|Q|+1$ ranks in $L_n$. 
We then re-color some points in $Q$ with  $m$ new colors as follows. 
We can choose $\ell$ points on each $r_i$-chain to recolor in $(^{r_i}_{\ell})$ ways, and the number of ways to recolor these points with $m_i$ colors is $(m_i)_{\ell}$. Thus the total number of ways to recolor $\ell$ points  of an $r_i$-chain is $(^{r_i}_{\ell}) (m_i)_{\ell}$. Since two chains with a different number of points recolored will not have the same coloring, the total number of ways to recolor  the points of $Q_i$ using $m_i$ colors is $\sum_{\ell=0}^{\min\{r_i, m_i\}}(^{r_i}_{\ell}) (m_i)_{\ell}$.  By our choice of $m_i$, we can color the $t_i$ chains, each containing $r_i$ points,  differently.  Similarly, by our choice of $m$, there are enough colors for each value of $i$.    Chains of different lengths  in $Q$ can not map to one another under any automorphism, hence every point in $Q$ is pinned by this coloring.  By Proposition~\ref{tough-pin-prop}, every point in $L_n$ is pinned and our coloring is distinguishing.  
\end{proof}

As seen from the proof of the theorem, chains of different lengths may be considered independently. Let $Q_L$ have $t$ chains, each  of length $r$, and 
let $m$ be the smallest integer such that $t \leq \sum_{\ell=0}^{\min\{r, m\}}(^{r}_{\ell}) (m)_{\ell}$. Then the upper bound in Theorem~\ref{chi-D-bound-minus} for $\chi_D(L)$ is $\chi_D(Q_L)+|Q_L|-1$, and since an $r$-chain needs at least $r$ colors, the smallest value of $\chi_D(Q_L)$ is $r$. 
The coloring in Theorem~\ref{better-when-smaller} allows us to use fewer than $r$ new colors, and thus can be a better bound when $t$ is not too large.  For example, let $r=5$ and $t=31$, then Theorem~\ref{better-when-smaller} allows us to use only two new colors, whereas the proof in Theorem~\ref{chi-D-bound-minus} uses at least five. Thus, Theorem~\ref{chi-D-bound-minus} gives an upper bound of $5+5\cdot 31-1=159$, whereas Theorem~\ref{better-when-smaller} gives an upper bound of $2+5\cdot 31+1 = 158$. 
The formula $\sum_{\ell=0}^{\min\{r, m\}}(^{r}_{\ell}) (m)_{\ell}$ is a well-known formula for the number of ways of placing $\ell $ rooks on an $m\times r$ chessboard \cite{KaRi46}.

\subsection{Bounds for Boolean lattices} \label{Boolean}

The Boolean lattice $B_n$ is the lattice of
subsets of $\{1,2,3, \ldots, n\}$, ordered by
inclusion. It is a distributive lattice and its
join-irreducibles are the singletons  $\{ \{k\}\;|\;1\leq k\leq
n\}$.  The number of points in any longest chain of $B_n$ is $n+1$.

By Theorem~\ref{chi-D-bound-minus}, $\chi_D(B_n)\leq n+n=2n$. The next theorem has a tighter bound. 

\begin{Thm} \label{boolean-th} Let $B_n$ be the Boolean lattice of $\{1,2,3,\ldots, n\}$. Then
$\chi_D(B_n)\leq n+3$. 
\end{Thm}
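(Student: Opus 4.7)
The plan is to construct an explicit proper distinguishing coloring of $B_n$ using at most $n+3$ colors, improving the bound $2n$ that would follow from Theorem~\ref{chi-D-bound-minus} (since $Q_{B_n}$ is an antichain of $n$ singletons and hence has $\chi_D(Q_{B_n}) = n$).  We start with the rank coloring $\phi_0(S) = |S|$, which by Remark~\ref{rank-rem} is a proper coloring of $B_n$ using the $n+1$ colors $\{0, 1, 2, \ldots, n\}$.

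Next, we introduce two additional colors $A$ and $B$ and recolor a small number of points.  Specifically, recolor the singleton $\{1\}$ with color $B$, and recolor the ``path'' of consecutive pairs $\{i, i+1\}$ for $1 \le i \le n-1$ with color $A$.  This uses $n+3$ colors in total.  Properness is immediate: the color-$A$ class consists of distinct rank-$2$ subsets and hence is an antichain, the color-$B$ class is a singleton, and each rank class remains an antichain after the removal of the recolored elements.

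To verify the coloring is distinguishing, recall that $\mathrm{Aut}(B_n) \cong S_n$, acting by $S \mapsto \pi(S)$.  Let $\pi$ preserve the coloring.  Since $\{1\}$ is the unique point of color $B$, we have $\pi(\{1\}) = \{1\}$, so $\pi(1) = 1$.  The only $A$-colored subset containing $1$ is $\{1,2\}$, so $\pi(\{1,2\}) = \{1,2\}$, which forces $\pi(2) = 2$.  Proceeding by induction, suppose $\pi$ fixes $1, 2, \ldots, k$ for some $k \ge 2$.  Then $\pi(\{k, k+1\}) = \{k, \pi(k+1)\}$ must be an $A$-colored subset containing $k$, so it equals either $\{k-1, k\}$ or $\{k, k+1\}$; the first option would require $\pi(k+1) = k-1$, contradicting $\pi(k-1) = k-1$ and the injectivity of $\pi$.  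Hence $\pi(k+1) = k+1$, and the induction gives $\pi = \mathrm{id}$.

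The main obstacle is choosing an additional-color pattern that is simultaneously proper and rigid under the full symmetric group $S_n$ acting on the $n$ join-irreducibles.  The path of consecutive pairs on rank $2$ leaves only the order-reversal $i \mapsto n+1-i$ as a nontrivial symmetry of the $A$-class, and the single $B$-colored point $\{1\}$ breaks that remaining symmetry by pinning the left endpoint of the path.
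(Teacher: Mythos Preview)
Your proof is correct, and it takes a genuinely different route from the paper's. Both arguments start from the rank coloring and add two new colors, but the recolored sets are chosen very differently.

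The paper recolors, for odd $n$, the two antichains
\[
S_i=\{i,i+1,\dots,2i-1\}\quad\text{and}\quad T_i=\{n-i+1,\dots,n-2i+2\}\qquad(1\le i\le \tfrac{n+1}{2}),
\]
one family in color $a$ and the other in color $b$; each family contains one set of every cardinality from $1$ to $\tfrac{n+1}{2}$, so each recolored point is the unique one of its color at its rank and hence pinned. The singletons are then pinned by tracking, for each $k$, the highest-rank $a$-point (resp.\ lowest-rank $b$-point) lying above $\{k\}$. The even case is handled by reusing the odd construction for $B_{n-1}$.

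Your construction is more economical: the $A$-class $\{\{i,i+1\}:1\le i\le n-1\}$ sits entirely at rank~$2$, and its stabilizer in $S_n\cong\mathrm{Aut}(B_n)$ is exactly the automorphism group of the path $P_n$, namely $\{\mathrm{id},\ i\mapsto n+1-i\}$. The single $B$-colored point $\{1\}$ then kills the reversal, and your induction along the path pins the singletons one by one. This avoids the parity split and the bookkeeping with the families $S_i,T_i$; the paper's construction, on the other hand, has the feature that every new color class meets each rank in at most one point, which is not true of your $A$-class but is irrelevant for the bound.

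A tiny remark: your sentence ``This uses $n+3$ colors in total'' is literally true only for $n\ge 3$ (for $n\le 2$ some rank colors become unused after recoloring), but of course the inequality $\chi_D(B_n)\le n+3$ still holds.
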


\begin{proof} Let $n$ be odd. We initially color each element $x$ with $r(x)$, for $0\leq r(x)\leq n$. This coloring is proper. Next we will recolor some of the vertices to obtain a distinguishing coloring, using two new colors, $a$ and $b$. 

Let $S_i = \{i, i+1, i+2, \ldots, 2i-1\}$ for $1 \le i \le \frac{n+1}{2}$.  For $1\le i < j \le \frac{n+1}{2}$,  we have $i \in S_i - S_j$ and $2j-1 \in S_j - S_i$, thus the $S_i$ form an antichain. We change the color of each of the $S_i$ to $a$.  
  
Similarly, let $T_i = \{n-(i-1), n-i, n-(i+1), \ldots,   n-2(i-1)\}$ for $1 \le i \le \frac{n+1}{2}$. For 
$1\le i < j \le \frac{n+1}{2}$,  we have $n-(i-1) \in T_i - T_j$ and $n-2(j-1) \in T_j - T_i$, thus the $T_i$ form an antichain. We change the color of each of the $T_i$ to $b$.

For example, let $n=7$. Then we color $\{1\}, \{2,3\}, \{3,4,5\}, \{4,5,6,7\}$ with $a$, and 
we color $\{7\}, \{6,5\}, \{5,4,3\},\{4, 3, 2, 1\}$ with $b$
An alternative description of the red elements
is, for each $1\leq i\leq \frac{n+1}{2}$,  $S_i 
$ contains the $i$ smallest elements greater
than or equal to $i$. Similarly, the blue
elements are described as, for each $1\leq j\leq 
\frac{n+1}{2}$, $T_j$ contains the $j$ largest
elements less than or equal to $n-j$.

We show that the coloring is distinguishing. Each point of $B_n$ colored $a$ is pinned because it is the only point colored $a$ in its rank. Similarly, each point of $B_n$ colored $b$ is pinned. Since $Q_{B_n}$ is the set of rank 1 points, it is enough to show that the rank 1 points are pinned.  
Given $i$, $1\leq i\leq \frac{n+1}{2}$, then the
highest ranked point colored $a$ that contains $i$ is
$S_i$. For $\frac{n+1}{2}\leq j$, the highest
ranked point colored $a$ that contains $j$ is
$S_{\frac{n+1}{2}}$. Thus $\{i\}$ is  pinned for
$1\leq i\leq \frac{n-1}{2}$.

Similarly, given $j$, $\frac{n+1}{2}\leq j\leq
n$, the lowest ranked point colored $b$ that contains
$j$ is $T_j$. The lowest ranked point colored $b$
that contains $i$ is $T_{\frac{n+1}{2}}$, for 
$1\leq i\leq \frac{n+1}{2}$. Thus $\{j\}$ is
pinned for $\frac{n+3}{2}\leq j\leq n$. Now
$\frac{n+1}{2} $ is the only element that is contained in both a point colored $a$ and a point colored $b$
 at rank $\frac{n+1}{2}$. Thus, $\{ \frac{n+1}{2}\}$ is
pinned. Hence all the join-irreducibles of $B_n$
are pinned, and this coloring is
distinguishing.

Let $n$ be even. Then color all the elements by
their rank and then alter the coloring using the
same red and blue elements as in the case for
$B_{n-1}$. In this coloring, $n$ will not appear
in any  point colored $a$ or $b$, but each of
$1,2,\ldots n-1$ will do so. Thus $\{n\}$ is
pinned. The other join-irreducibles of $B_n$ are
pinned for the same reasons as in the previous
argument.
\end{proof}

The examples in the next two propositions  show that the bound in Theorem~\ref{chi-D-bound-minus} is tight when $\chi_D(Q_L) = 3$ or $4$.

\begin{Prop} The Boolean Lattice $B_3$ has $\chi_D(B_3)= 5$.
\end{Prop}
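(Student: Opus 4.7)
My plan is to prove $\chi_D(B_3) = 5$ by matching upper and lower bounds. The upper bound will follow immediately from Theorem~\ref{chi-D-bound-minus}: the join-irreducibles of $B_3$ are the three singletons $\{1\},\{2\},\{3\}$, which form a $3$-element antichain, so $|Q_{B_3}| = 3$ and $\chi_D(Q_{B_3}) = 3$ (antichains force all points to have distinct colors in a distinguishing coloring). Since $\chi_D(Q_{B_3})\geq 3$, the theorem yields $\chi_D(B_3) \leq 3 + 3 - 1 = 5$.

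For the lower bound, I would show that no proper $4$-coloring of $B_3$ can be distinguishing. First, $B_3$ has a maximal chain with $4$ points, so any proper coloring uses at least $4$ colors. Suppose we have a proper coloring with exactly $4$ colors. Because $\emptyset$ and $\{1,2,3\}$ are each comparable to every other point, they must be assigned two distinct colors that appear nowhere else; hence the six middle points (rank $1$ and rank $2$) use only the remaining two colors.

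The heart of the argument is a structural observation: the comparability relations among the six middle points form the $6$-cycle
\[
\{1\} - \{1,2\} - \{2\} - \{2,3\} - \{3\} - \{1,3\} - \{1\},
\]
which is connected and bipartite with the rank-$1$ versus rank-$2$ bipartition. Therefore any proper $2$-coloring of this subgraph is unique up to swapping the two colors, and must assign one color to all three rank-$1$ elements and the other to all three rank-$2$ elements. Thus every proper $4$-coloring of $B_3$ is essentially the rank coloring, and is preserved by any permutation of $\{1,2,3\}$, which is a non-trivial automorphism of $B_3$. So the coloring cannot be distinguishing, giving $\chi_D(B_3) > 4$.

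Combined with the upper bound this yields $\chi_D(B_3) = 5$. The only step that needs real verification is the ``forcing'' claim, namely that after $\emptyset$ and $\{1,2,3\}$ consume two colors, the proper $2$-coloring of the middle layer is unique up to color swap; this reduces to the connectedness of the explicit $6$-cycle above, so no genuine obstacle arises.
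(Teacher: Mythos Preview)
Your proof is correct. For the upper bound you invoke Theorem~\ref{chi-D-bound-minus} exactly as the paper intends (the paper's citation to Theorem~\ref{boolean-th} appears to be a misprint for Theorem~\ref{chi-D-bound-minus}, since the stated values $|Q_{B_3}|=3$ and $\chi_D(Q_{B_3})=3$ are precisely the inputs to that theorem; the paper also exhibits an explicit $5$-coloring).

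Your lower bound argument is a genuine addition. The paper's proof, as written, supplies only the upper bound and an explicit $5$-coloring; it does not include any argument that $\chi_D(B_3)>4$. Your reduction---observing that $\emptyset$ and $\{1,2,3\}$ each consume a private color, and that the comparability graph on the middle six points is the $6$-cycle $\{1\}-\{1,2\}-\{2\}-\{2,3\}-\{3\}-\{1,3\}-\{1\}$, hence admits a unique proper $2$-coloring up to swap---is a clean and complete way to rule out four colors. So on this point your treatment is actually more thorough than the paper's.
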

\begin{proof}
Observe that $|Q_{B_3}| = 3$ and $\chi_D(Q_{B_3}) = 3$, so by Theorem~\ref{boolean-th}, $\chi_D(B_3)\leq 5$.    The following is a proper and distinguishing coloring of $B_3$ using five colors:  $\emptyset $ is red, $\{1\}$ and $\{2,3\}$ are blue,  $\{3\}$ and $\{1,2\}$ are green, $\{2\}$ and $\{1,3\}$ are yellow, and $\{1,2,3\}$ is purple.
 Thus,  $\chi_D(B_3)= 5$.
\end{proof}

\begin{Prop}
The Boolean lattice $B_4$ has 
$\chi_D(B_4)= 7$.
\end{Prop}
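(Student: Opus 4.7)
The upper bound $\chi_D(B_4) \le 7$ is Theorem~\ref{boolean-th} with $n=4$, so the main task is to prove $\chi_D(B_4) \ge 7$. My plan is to suppose $\phi$ is a proper distinguishing coloring of $B_4$ using at most $6$ colors and derive a contradiction. Since $\emptyset$ and $\{1,2,3,4\}$ are comparable to every other element, their colors appear nowhere else in the coloring, leaving only $4$ colors available for the $14$ elements of ranks $1, 2, 3$.

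The key structural tool is a pair-covering lemma. Let $\pi_1$ be the partition of $\{1,2,3,4\}$ in which $i \sim j$ iff $\phi(\{i\}) = \phi(\{j\})$, and $\pi_3$ the partition with $i \sim j$ iff $\phi(\{1,2,3,4\} \setminus \{i\}) = \phi(\{1,2,3,4\} \setminus \{j\})$. A rank-$2$ element $\{i, j\}$ is comparable to exactly four elements of ranks $1$ and $3$: the singletons $\{i\}, \{j\}$ and the triples $\{1,2,3,4\} \setminus \{i\}, \{1,2,3,4\} \setminus \{j\}$. Properness forces $\phi(\{i, j\})$ to differ from each of these four neighbor colors; since only $4$ colors are available, two of the neighbor colors must coincide. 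Properness also rules out a coincidence between a rank-$1$ neighbor and a rank-$3$ neighbor (they are comparable), so the coincidence lies either within rank $1$, giving $\phi(\{i\}) = \phi(\{j\})$, or within rank $3$, giving $\phi(\{1,2,3,4\} \setminus \{i\}) = \phi(\{1,2,3,4\} \setminus \{j\})$. Equivalently, every $2$-subset $\{i, j\} \subseteq \{1,2,3,4\}$ lies in a block of $\pi_1$ or in a block of $\pi_3$.

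Applying the pair-covering lemma (and the complement symmetry $S \mapsto \{1,2,3,4\} \setminus S$ between ranks $1$ and $3$), I would first show that $\pi_1$ and $\pi_3$ each have at most $3$ blocks, and then enumerate the allowed pairs $(\pi_1, \pi_3)$: the number of pairs within blocks of the partition shapes $[4], [3,1], [2,2], [2,1,1]$ is $6, 3, 2, 1$ respectively, so the covering condition restricts $(\pi_1, \pi_3)$ to a short list (for example, $\pi_1 = [2,1,1]$ forces $\pi_3 = [4]$). For each allowed combination I would compute the joint stabilizer $H := G_{\pi_1} \cap G_{\pi_3}$ in $S_4$; in every case $H$ is non-trivial, containing at least a transposition.

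The final and I anticipate most delicate step is to show that no rank-$2$ coloring can break this residual symmetry $H$. Each rank-$2$ element's color must lie in the $4$-color palette minus its four neighbor colors, so it is constrained to a small set (often a single color, hence forced). When $\sigma \in H$ swaps two rank-$2$ elements $\{i,j\}$ and $\sigma(\{i,j\})$, these two elements have the same neighbor-color set (since $\sigma$ preserves the rank-$1$ and rank-$3$ colorings) and therefore the same set of valid colors; whenever either is forced, both are forced to the same color. A careful verification that any remaining freedom is confined to rank-$2$ elements fixed by $\sigma$ then shows that $\sigma$ preserves the entire coloring $\phi$, contradicting the assumption that $\phi$ is distinguishing. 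This completes the proof that $\chi_D(B_4) \ge 7$.
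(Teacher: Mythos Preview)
Your overall strategy---reduce to four colours on ranks $1,2,3$, extract a covering condition on pairs from the rank-$2$ constraints, and then show the residual stabiliser cannot be broken at rank $2$---is sound and is a more structural alternative to the paper's direct case split on the number of colours appearing at rank~$1$.  However, your key lemma is misstated because you have misidentified the rank-$3$ neighbours of a rank-$2$ element.  The triples comparable to $\{i,j\}$ are those \emph{containing} $\{i,j\}$, namely $\{i,j,k\}=\{1,2,3,4\}\setminus\{l\}$ and $\{i,j,l\}=\{1,2,3,4\}\setminus\{k\}$, where $\{k,l\}=\{1,2,3,4\}\setminus\{i,j\}$.  The sets $\{1,2,3,4\}\setminus\{i\}$ and $\{1,2,3,4\}\setminus\{j\}$ that you list are precisely the two triples \emph{not} containing $\{i,j\}$.  (Note also that with your incorrect neighbours the claim ``a rank-$1$ neighbour and a rank-$3$ neighbour are always comparable'' fails: $\{i\}$ and $\{1,2,3,4\}\setminus\{i\}$ are incomparable.)

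With the correct neighbours, the coincidence forced at $\{i,j\}$ is either $\phi(\{i\})=\phi(\{j\})$ or $\phi(\{1,2,3,4\}\setminus\{k\})=\phi(\{1,2,3,4\}\setminus\{l\})$; in your notation the conclusion becomes: for every pair $\{i,j\}$, either $\{i,j\}$ lies in a block of $\pi_1$ or the \emph{complementary} pair $\{k,l\}$ lies in a block of $\pi_3$.  This twist does not change the cardinality inequality $|S_1|+|S_3|\ge 6$, but it does change which specific $(\pi_1,\pi_3)$ pairs are admissible (for instance, $\pi_1=\pi_3=\{\{1,2\},\{3,4\}\}$ now \emph{does} satisfy the covering condition), so your enumeration and stabiliser computation must be redone accordingly.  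Finally, the last step is genuinely delicate: in some cases (corresponding to the paper's Case~3 with a $3{+}1$ split at rank~$1$) several rank-$2$ elements have two allowable colours, and you need a pigeonhole argument rather than pure forcing to exhibit the colour-preserving automorphism.
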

\begin{proof} 
Observe that $|Q_{B_4}| = 4$ and $\chi_D(Q_{B_4}) = 4$, so by Theorem~\ref{boolean-th}, $\chi_D(B_4)\leq 7$.  
Thus we must  show that $\chi_D(B_4) >6$.  Suppose for a contradiction that we have a proper and distinguishing coloring  $\phi$ of $B_4$ using 6 colors.  One color is used for $\emptyset$ and another for $\{1,2,3,4\}$, so the remaining points must be colored using four colors:  red, blue, yellow and green.  We consider cases depending on the number of colors used on the rank 1 points.

\noindent {\bf Case 1: }  Four colors are used on the rank 1 points.

Without loss of generality we may assume $\{1\}$ is red, $\{2\}$ is blue, $\{3\}$ is green, and  $\{4\}$ is yellow.  Since $\phi$ is proper, we know $\{1,2,3\}$ must be yellow and $\{1,2,4\}$ must be green.    Now no color is available for $\{1,2\}$ since it is comparable to points that use all four colors.

\noindent {\bf Case 2:}  Three colors are used on the rank 1 points.

Without loss of generality we may assume $\{1\}$ is red, $\{2\}$ is blue,  and $\{3\}$ and  $\{4\}$ are yellow.   Since $\phi$ is proper, the colors are forced on all points except $\{3,4\}$, namely, $\{1,2,3\}$  and $\{1,2,4\}$ are green, $\{1,2\}$ is yellow, $\{2,3\}$ is red, $\{1,3\}$ is blue,  $\{2,4\}$  is red, $\{1,3,4\}$  and $\{2,3,4\}$ are green,  and $\{1,4\}$ is blue.  Regardless of whether $\{3,4\}$ is red or blue, the automorphism that swaps all instances of $3$ and $4$ preserves colors, contradicting our assumption that $\phi$ is distinguishing.

\noindent {\bf Case 3:}  Two colors are used on the rank 1 points.

First consider the instance that the two colors each appear on two rank 1 points.
Without loss of generality we may assume $\{1\}$ and  $\{2\}$ are red, $\{3\}$ and  $\{4\}$ are blue, and $\{1,3\}$ is green.  Since $\phi$ is proper, the colors are forced on all points except $\{1,2\}$ and $\{3,4\}$ as follows:  $\{1,2,3\}$ and $\{1,3,4\}$ are yellow, $\{2,3\}$ is green, $\{1,4\}$ is green,  $\{1,2,4\}$  and $\{2,3,4\}$ are yellow, and $\{2,4\}$ is green.  Regardless of the colors of $\{1,2\}$ and $\{3,4\}$, the automorphism that swaps 1 and 2 and also swaps 3 and 4 preserves colors, contradicting our assumption that $\phi$ is distinguishing.

Now consider the instance that one color appears on three of the rank 1 points and the other appears on one.  Without loss of generality we may assume $\{1\}$ and  $\{2\}$  and  $\{3\}$ are red,   $\{4\}$ is blue, and $\{3,4\}$ is green.  Since $\phi$ is proper, the colors are forced on all points except $\{1,2\}$, $\{1,3\}$,  $\{2,3\}$, and $\{1,2,3\}$ as follows:   $\{1,3,4\}$ and $\{2,3,4\}$ are yellow, $\{2,4\}$ and $\{1,4\}$ are green, and $\{1,2,4\}$ is yellow.  The remaining points  $\{1,2\}$, $\{1,3\}$,  $\{2,3\}$ must be green or blue, and two of  them must be the same color.  Without loss of generality, $\{1,2\}$ and  $\{1,3\}$ are the same color, but then the automorphism that swaps 2 and 3 preserves colors, contradicting our assumption that $\phi$ is distinguishing.

\noindent {\bf Case 4:}  The rank 1 points use one color.

If the rank 3 points use two or more colors, then  we reach a contradiction using  previous cases since $\phi$ is also a proper and distinguishing coloring of the dual of $B_4$.  Thus without loss of generality we may assume $\{1\}$,  $\{2\}$,  $\{3\}$ and  $\{4\}$ are red, and $\{1,2,3\}$, $\{1,2,4\}$, $\{1,3,4\}$, and$\{2,3,4\}$ are blue, and the rank 2 points are green and yellow.    Any such  coloring is proper but not distinguishing.  Coloring the rank 2 points using green and yellow is equivalent to giving a distinguishing coloring to the edges of the $K_4$ graph using green and yellow.  We  show this is impossible. 

   If such a coloring were possible, without loss of generality,  at least three edges are green.  First suppose there is a triangle of green edges, say $\{1,2\}$, $\{1,3\}$, and $\{2,3\}$ are green.  If the remaining edges are yellow, the automorphism $(123)(4)$ preserves colors.  If one additional edge is green, say $\{1,4\}$, then the automorphism $(23)(1)(4)$ preserves colors.  If two additional edges are green, say $\{1,4\}$  and $\{2,4\}$  then the automorphism $(12)(3)(4)$ preserves colors.  If there is no triangle of green edges, then  without loss of generality, $\{1,2\}$, $\{2,3\}$ and $\{3,4\}$ are green and the remaining three edges are yellow.  In this instance, the automorphism $(14)(23)$ preserves colors.
\end{proof}


\section{Rank-Connected Planar Posets} \label{sect-planar}

 In this final section of results, we consider rank-connected planar lattices.
 
In Section~\ref{sect-poset-defn} we defined planar posets and ranked posets.  Note that a planar poset is a lattice if it has both a minimal and maximal element. 

\begin{Def} {\rm A ranked poset is \emph{rank-connected} if every pair of consecutive ranks, considered as a vertex-induced subgraph is connected. }\end{Def}

\begin{Def}{\rm  Incomparable points $x$ and $y$ are \emph{twins}  if they have the same relationship to all other points of the poset.  A poset is \emph{twin-free} if it has no twins.  }
\end{Def}

Cohen-Macaulay posets, are a well-known class of posets in the study of flag $f$-vectors of simplicial complexes, for example, see  \cite{St96}. Although it is difficult to obtain a complete characterization of the set of flag $f$-vectors of Cohen-Macaulay posets, many subclasses of this set are lexicographically shellable and thus have an explicit shelling. Collins \cite{Co92} has shown that for ranked, planar lattices, being Cohen-Macaulay is the same as being rank-connected. In this section, we show that the distinguishing number of a rank-connected, twin-free, planar lattice is less than or equal to 2. The proof is completely different from the proof of Theorem~\ref{distrib-lattice-thm}, which uses the regular structure of a distributive lattice. However, the 2-coloring of the points is again a chain in the poset.

We say that a Hasse diagram for  ranked planar poset is a \emph{standard diagram} if  it is planar, all points at a given rank have the same $y$-coordinate, and all edges are straight line segments.  
The following result appears to be part of the folklore of the field \cite{Tr19}.  We include a proof for completeness.

\begin{Prop}
If $P$  is a ranked  planar poset with $\hat{0}$ and $ \hat{1}$ then $P$ has a standard~diagram.
\label{folklore-prop}
\end{Prop}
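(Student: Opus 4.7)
The plan is to begin with an arbitrary planar drawing of the Hasse diagram of $P$, with edges drawn as curves going strictly upward, and to transform it into a standard diagram in two stages: first, normalize the $y$-coordinates so that every rank-$r$ point lies on the horizontal line $y = r$; second, replace the curved edges by straight line segments. Because $\hat{0}$ and $\hat{1}$ are the unique minimum and maximum of $P$, I may choose the initial embedding so that both lie on the outer face, making the Hasse diagram an upward planar drawing of a single-source, single-sink digraph.

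The concrete steps are as follows. First, use the planar embedding to extract, for each rank $r$, a linear order $\pi_r$ on the rank-$r$ points of $P$. One way to define $\pi_r$ is to sweep a continuous topological arc from just above $\hat{0}$ up to just below $\hat{1}$ and record the order in which rank-$r$ points are encountered; equivalently, propagate the cyclic rotation at $\hat{0}$ upward through the face structure of the embedding. Second, construct a new drawing by placing each rank-$r$ point $v$ at coordinates $(\pi_r(v),\,r)$ and joining each cover by a straight segment. Because $P$ is ranked, every cover relation $a \prec b$ connects two consecutive ranks, so all segments between ranks $r$ and $r+1$ lie in the horizontal strip $r \le y \le r+1$ and need only be checked for crossings against one another.

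The main obstacle is verifying that this straight-line redrawing is still planar, which I would reduce to a single compatibility claim: for any two covers $a \prec b$ and $c \prec d$ with $a,c$ at rank $r$ and $b,d$ at rank $r+1$, if $\pi_r(a) < \pi_r(c)$ then $\pi_{r+1}(b) \le \pi_{r+1}(d)$. The topology of the original embedding should force this: a violation would let one trace a closed curve, formed by the two offending edge-curves together with monotone chains running down to $\hat{0}$ and up to $\hat{1}$, that produces an unavoidable edge crossing in the original planar diagram. Making this topological argument precise---rather than the straightforward bookkeeping of the $\pi_r$'s and the final straightening step---is where the real work of the proof lies. Once compatibility is established, no two straight edges between consecutive ranks cross, there are no edges spanning three or more ranks to worry about, and by construction all rank-$r$ points lie on the line $y = r$, so the result is a standard diagram.
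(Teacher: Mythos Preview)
Your global strategy---extract a left-to-right order $\pi_r$ on each rank from the planar embedding, then redraw at integer coordinates $(\pi_r(v),r)$---is a legitimate alternative to the paper's argument, and your compatibility claim (that $\pi_r(a)<\pi_r(c)$ forces $\pi_{r+1}(b)\le\pi_{r+1}(d)$ for covers $a\prec b$, $c\prec d$) is exactly the right lemma.  The paper, by contrast, proceeds inductively rank by rank: assuming ranks $0,\dots,k$ are already straightened onto horizontal lines, it orders the edges crossing the strip between $L_k$ and $L_{k+1}$, uses this to order the rank-$(k{+}1)$ points, and then argues directly that each $z_i\in R_{k+1}$ covers a \emph{consecutive} block of $R_k$, with distinct $z_i$'s sharing at most one boundary point.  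The crucial observation there is that consecutiveness is forced by the existence of $\hat{1}$: a point of $R_k$ trapped strictly inside the ``cone'' of $z_i$ would have no upward route to $\hat{1}$.  This is precisely the content of your compatibility claim, proved locally.

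The gap in your write-up is that both the definition of $\pi_r$ and the proof of compatibility are left as gestures.  Your two proposed definitions of $\pi_r$ (sweeping an arc, or propagating the rotation at $\hat{0}$) are not yet precise enough to argue from, and the sentence about tracing a closed curve through $\hat{0}$ and $\hat{1}$ is the right intuition but not a proof.  You could close the gap either by invoking the standard theory of planar $st$-graphs (where the left/right structure is built in and the compatibility is a known consequence), or by doing what the paper does: peel off one rank at a time, so that the order on $R_k$ is already the $x$-coordinate order and the interval property of down-neighborhoods follows from the $\hat{1}$-connectivity argument above.  The inductive route is more self-contained; your global route is cleaner to state but outsources the hard step.
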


\begin{proof}

Partition the points of $P$ by rank so that $R_i$  is the set of points of rank $i$ for $0 \le i \le n$.    Since $P$ is ranked, each covering edge in $P$ is between points at consecutive ranks.  Suppose we have a planar Hasse diagram for $P$ in which the points of each rank have the same $y$-coordinate for ranks $k$ and lower and every edge between points of rank at most $k$  is a straight line segment.  If $k = n$ we are done, so assume $k < n$.    

Let $w_1, w_2, w_3, \ldots, w_r$ be the elements of $R_k$ listed from left to right in the Hasse diagram.  If $n = k+1$ then $R_{k+1} = \{\hat{1}\}$ and we can draw a straight line segment from each element of $R_k$ to $\hat{1}$, and this completes the proof.    Otherwise, $n \ge k+2$.

Let $L_k$ be the horizontal line containing the points of $R_k$ and $L_{k+1}$ the horizontal line containing the point(s) of $R_{k+1}$ with lowest $y$-coordinate.  In Figure~\ref{fig-planar}, the only point   of $R_{k+1}$ with lowest $y$-coordinate is $z_2$.   Order the edges between $R_k$ and $R_{k+1}$ from left to right as $e_1, e_2, \ldots, e_t$ by their order in the strip between lines $L_k$ and $L_{k+1}$.  This order is uniquely determined because the diagram is planar. This ordering of edges induces an ordering of the points in $R_{k+1}$ as follows: for any $u,v\in R_{k+1}$, we order $u$ before $v$ if the leftmost edge $e_i$ incident to $u$ is to the left of the leftmost edge $e_j$ incident to $v$. The resulting order is $z_1, z_2, z_3, \ldots , z_s$ and this is illustrated  by the example in Figure~\ref{fig-planar}.  

For each $z_i$ that is above line $L_{k+1}$, choose an edge $e_j$ incident to $z_i$ and relocate $z_i$ to the point $z_i'$ where edge $e_j$ meets line $L_{k+1}$.  In Figure~\ref{fig-planar},  in each case   the first such edge was selected.  For each $z_i \in R_{k+1}$, let $N(z_i)$ be the set of points in $R_k$ covered by $z_i$.  The points in $N(z_i)$  have consecutive indices, for otherwise, there would be a point in $R_k$ with no upward route to $\hat{1}$.   Thus we can think of $z_i$ and its edges to $N(z_i)$ as forming a cone.  If   $N(z_i) = \{w_r, w_{r+1}, \ldots, w_t\}$, then $z_i$ is the only member of $R_{k+1}$ that covers any of the internal points $w_{r+1}, w_{j+2}, \ldots, w_{t-1}$, for any other such element in $R_{k+1}$ would have no upward path to $\hat{1}$.  Thus cones intersect only at their outermost points.  Indeed, if $w_r \neq w_t$, then no other $z_j$ of $R_{k+1}$ can cover \emph{both} $w_r$ and $w_t$, because this would imply that one of $z_i, z_j$ would have no upward path to $\hat{1}$.  Thus  for all   $z_i, z_j\in R_{k+1}$ with $i \neq j$ we have,  $|N(z_i)\cap N(z_j)|\leq 1$.  Furthermore,    if $i<j$ and $N(z_i)\cap N(z_j)\neq \emptyset$, then by the planarity assumption and the way we indexed the $z_i$'s, the unique point in $N(z_i)\cap N(z_j)$ is the rightmost element of $R_k$ incident to $z_i$ and also is  the leftmost element of $R_k$ incident to $z_j$. 
Starting with edges incident to $z_1 $ and continuing rightward, we can draw the edges between $R_{k+1}$ and $R_k$   as straight line segments from the points $z_1', z_2', \ldots, z_s'$ on $L_{k+1}$ to the points $w_1, w_2, \ldots w_r$ on $L_k$ and by the properties above we know that none of these segments  cross.  This is illustrated in the right portion of Figure~\ref{fig-planar}.

It remains to reroute the edges between $R_{k+1}$ and $R_{k+2}$.  For each $z_i' \in R_{k+1}$, form a narrow band from $z_i'$ to $z_i$ along the original edge $e_j$ that is incident to $z_i'$ (see Figure~\ref{fig-planar}).  Each edge between $z_i$ and $ R_{k+2}$ will now start at $z_i'$, travel through this band to $z_i$ and continue on its original path to its destination in $R_{k+2}$.  The result is a planar Hasse diagram for $P$ in which points at each rank are each located on a horizontal line for ranks $k+1$ and lower, and edges between points  of ranks at most  $k+1$ are straight line segments.  The result follows by induction.
\end{proof}

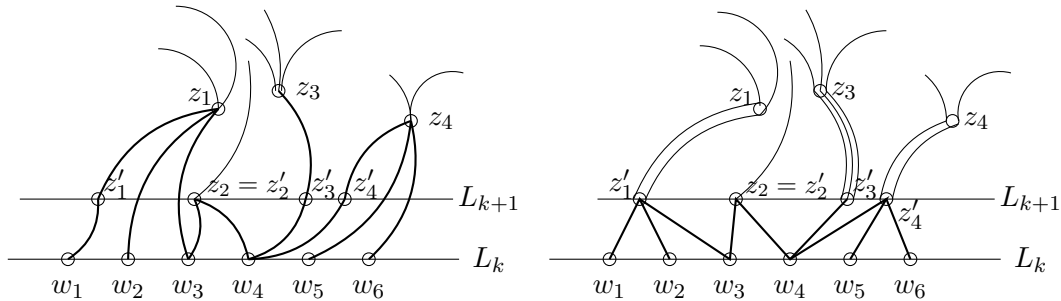
\begin{figure}[h]

\begin{center}
\begin{tikzpicture}[scale=.8]
\draw
(0,0) circle [radius=3pt]
(1,0) circle [radius=3pt]
(2,0) circle [radius=3pt]
(3,0) circle [radius=3pt]
(4,0) circle [radius=3pt]
(5,0) circle [radius=3pt]
(.5,1) circle [radius=3pt]
(2.1,1) circle [radius=3pt]
(3.95,1) circle [radius=3pt]
(4.6,1) circle [radius=3pt]
(2.5,2.5) circle [radius=3pt]
(3.5,2.8) circle [radius=3pt]
(5.7,2.3) circle [radius=3pt]
;

\draw (-1,0)--(6.5,0);
\draw (-.8,1)--(6.4,1);
\draw[thick] (.5,1) to [bend left] (2.5,2.5);
\draw[thick] (.5,1) to [bend left] (0,0);
\draw[thick]  (2.5,2.5) to [bend right] (1,0);
\draw[thick]  (2.5,2.5) to [bend right] (2,0);
\draw[thick] (2.1,1) to [bend left] (2,0);
\draw[thick] (2.1,1) to [bend left] (3,0);
\draw[thick] (3,0) to [bend right]  (3.95,1);
\draw[thick] (3.95,1) to [bend right]  (3.5,2.8);
\draw[thick] (3,0) to [bend right] (4.6,1) ;
\draw[thick] (4.6,1) to [bend left] (5.7,2.3) ;
\draw[thick] (4,0) to [bend right] (5.7,2.3) ;
\draw[thick] (5,0) to [bend right] (5.7,2.3) ;
\draw (2.5,2.5) arc (0:90:1);
\draw (2.5,2.5) arc (-45:90:1);

\draw(2.1,1) to [bend right] (3,3.3);

\draw (3.45,2.8) arc (0:60:1);
\draw (3.55,2.8) arc (180:90:1);
\draw (3.5,2.8) arc (-10:30:2);

\draw (5.7,2.3) arc (0:50:1);
\draw (5.7,2.3) arc (190:75:.7);


\node(0) at (7,0) {\small $L_k$};
\node(0) at (7,1) {\small $L_{k+1}$};
\node(0) at (0,-.5) {\small $w_1$};
\node(0) at (1,-.5) {\small $w_2$};
\node(0) at (2,-.5) {\small $w_3$};
\node(0) at (3,-.5) {\small $w_4$};
\node(0) at (4,-.5) {\small $w_5$};
\node(0) at (5,-.5) {\small $w_6$};
\node(0) at (.8,1.25) {\small $z'_1$};
\node(0) at (3,1.23) {\footnotesize $z_2=z'_2$};
\node(0) at (4.25,1.25) {\small $z'_3$};
\node(0) at (4.95,1.25) {\small $z'_4$};
\node(0) at (2.2,2.7) {\small $z_1$};
\node(0) at (4,2.8) {\small $z_3$};
\node(0) at (6.2,2.3) {\small $z_4$};

\draw
(9,0) circle [radius=3pt]
(10,0) circle [radius=3pt]
(11,0) circle [radius=3pt]
(12,0) circle [radius=3pt]
(13,0) circle [radius=3pt]
(14,0) circle [radius=3pt]
(9.5,1) circle [radius=3pt]
(11.1,1) circle [radius=3pt]
(12.95,1) circle [radius=3pt]
(13.6,1) circle [radius=3pt]
(11.5,2.5) circle [radius=3pt]
(12.5,2.8) circle [radius=3pt]
(14.7,2.3) circle [radius=3pt]
;

\draw (8,0)--(15.5,0);
\draw (8.8,1)--(15.4,1);
\draw[thick] (9,0)--(9.5,1)--(10,0);
\draw[thick] (9.5,1)--(11,0)--(11.1,1)--(12,0)--(12.95,1);
\draw[thick] (12,0)--(13.6,1)--(13,0);
\draw[thick] (13.6,1)--(14,0);
\draw (11.5,2.55) arc (0:90:1);
\draw (11.5,2.4) arc (-45:90:1);

\draw(11.1,1) to [bend right] (12,3.3);

\draw (12.4,2.8) arc (0:60:1);
\draw (12.6,2.8) arc (180:90:1);
\draw (12.5,2.8) arc (-10:30:2);

\draw (14.6,2.3) arc (0:50:1);
\draw (14.8,2.3) arc (190:75:.7);

\draw (13.7,1) to [bend left] (14.8,2.25);
\draw (13.5,1) to [bend left] (14.6,2.3);
\draw (12.85,1) to [bend right] (12.4, 2.8);
\draw (12.95,1) to [bend right] (12.5, 2.8);
\draw (13.05,1) to [bend right] (12.6, 2.8);
\draw(9.4,1) to [bend left] (11.5, 2.6); 
\draw(9.6,1) to [bend left] (11.5, 2.4);

\node(0) at (16,0) {\small $L_k$};
\node(0) at (16,1) {\small $L_{k+1}$};
\node(0) at (9,-.5) {\small $w_1$};
\node(0) at (10,-.5) {\small $w_2$};
\node(0) at (11,-.5) {\small $w_3$};
\node(0) at (12,-.5) {\small $w_4$};
\node(0) at (13,-.5) {\small $w_5$};
\node(0) at (14,-.5) {\small $w_6$};
\node(0) at (9.2,1.25) {\small $z'_1$};
\node(0) at (11.95,1.22) {\footnotesize $z_2=z'_2$};
\node(0) at (13.25,1.25) {\small $z'_3$};
\node(0) at (14,.75) {\small $z'_4$};
\node(0) at (11.2,2.7) {\small $z_1$};
\node(0) at (12.9,2.8) {\small $z_3$};
\node(0) at (15.1,2.3) {\small $z_4$};

\end{tikzpicture}

\end{center}

\caption{An illustration of the induction step in the proof of Proposition~\ref{folklore-prop}.}

\label{fig-planar}

\end{figure}

\begin{remark}
Using Proposition~\ref{folklore-prop},
it is not hard to show that the poset $L_{pqr}$ in Figure~\ref{ex-fig} is not planar.
\label{B-not-planar}
\end{remark}

The next lemma appears in \cite{Co92} and is helpful to us in the induction proof of Theorem~\ref{twin-free}. 
\begin{lemma}
\label{karen-lem}
Given a standard diagram of a planar rank-connected poset that has a $\hat{0}$ and a $\hat{1}$, there exists a rank in which the leftmost element $x$ covers exactly one element, $a$, and is covered by exactly one element, $b$.  Furthermore, if there is an element immediately to the right of $x$, it also covers $a$ and is covered by $b$.
\end{lemma}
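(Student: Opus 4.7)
The plan is to locate a bounded face of the standard diagram shaped like a four-vertex diamond whose bottom and top lie on the ``leftmost chain'' of $P$, from which the lemma's conclusion reads off directly. For each rank $i$, let $x_i$ denote the leftmost point of rank $i$ in the diagram. The first step is to show that $x_{i+1}$ covers $x_i$, so that $x_0 \prec x_1 \prec \cdots \prec x_n$ forms a maximal chain $C$. By the interval property for cover-sets established in the proof of Proposition~\ref{folklore-prop}, the set of rank-$i$ points covered by $x_{i+1}$ is a consecutive interval $[w_k, \ldots, w_{k+m-1}]$ of rank-$i$ points; if $k \ge 2$, then $x_i = w_1$ would need an upward cover $y$ at rank $i+1$ strictly to the right of $x_{i+1}$, and the straight-line edges $y \to x_i$ and $x_{i+1} \to w_k$ would then cross, contradicting planarity. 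Hence $k=1$.

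If $P$ coincides with $C$, then every interior rank satisfies the lemma (trivially, with no element to the right), so assume $P$ has a point off $C$. Then the Hasse diagram has a bounded face $F$ whose left boundary is a subsegment $x_p \prec x_{p+1} \prec \cdots \prec x_q$ of $C$ and whose right boundary is a covering path $x_p \prec v_{p+1} \prec v_{p+2} \prec \cdots \prec v_{q-1} \prec x_q$ with $v_j$ at rank $j$. I claim $q = p+2$. Suppose instead $q \ge p+3$, so both $v_{p+1}$ and $v_{p+2}$ exist. Because $F$ is a single face, no chord joins its boundary vertices, so neither of the cover relations $x_{p+1} \prec v_{p+2}$ nor $v_{p+1} \prec x_{p+2}$ holds. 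Combining this with the interval property applied upward at $x_{p+1}$ (its rank-$(p+2)$ covers form an interval starting at $x_{p+2}$, and the second element of that interval would be $v_{p+2}$, forbidden) and downward at $x_{p+2}$ forces $x_{p+1}$ to be covered only by $x_{p+2}$ and $x_{p+2}$ to cover only $x_{p+1}$. Hence $\{x_{p+1}, x_{p+2}\}$ is a connected component of the bipartite cover graph between ranks $p+1$ and $p+2$, disjoint from the component containing $v_{p+1}$ and $v_{p+2}$, contradicting rank-connectedness.

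Thus $F$ is a diamond on the four vertices $x_p, x_{p+1}, v_{p+1}, x_{p+2}$, and it remains to verify the lemma at rank $p+1$ with $x = x_{p+1}$, $a = x_p$, and $b = x_{p+2}$. Any cover $x_{p+1} \succ w$ with $w \ne x_p$ at rank $p$ would make the straight-line edges $x_{p+1} \to w$ and $x_p \to v_{p+1}$ cross, and similarly any cover $x_{p+1} \prec z$ with $z \ne x_{p+2}$ at rank $p+2$ would cross $v_{p+1} \to x_{p+2}$; hence $x_{p+1}$ covers exactly $x_p$ and is covered by exactly $x_{p+2}$. Since no rank-$(p+1)$ element may lie in the interior of the face $F$, $v_{p+1}$ is the element immediately to the right of $x_{p+1}$, and the right boundary of $F$ itself shows that $v_{p+1}$ covers $x_p$ and is covered by $x_{p+2}$. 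The main obstacle is the diamond claim, which hinges on simultaneously using the no-chord-in-a-face property (planarity), the interval property for covers from Proposition~\ref{folklore-prop}, and the rank-connected hypothesis.
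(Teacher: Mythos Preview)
The paper does not prove this lemma itself; it merely cites \cite{Co92}. So there is no in-paper argument to compare against, and I will assess your proof on its own merits.

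Your overall strategy---find a diamond-shaped face adjacent to the leftmost chain $C$ and read off the conclusion---is the natural one, and most of the argument is sound. In particular, the step showing that $x_{i+1}$ covers $x_i$, the observation that interior vertices $x_{p+1},\dots,x_{q-1}$ of a face's left boundary must be simple, and the rank-connectedness contradiction when $q\ge p+3$ are all correct.

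There is, however, a genuine gap at the point where you assert that ``the Hasse diagram has a bounded face $F$ whose left boundary is a subsegment of $C$ and whose right boundary is a covering path.'' This is not automatic, and it is not true of \emph{every} bounded face meeting $C$. For instance, take the rank-connected planar lattice on $\hat 0,\,x_1,a,\,x_2,b,\,\hat 1$ with covers $\hat 0\prec x_1,a$; $x_1\prec x_2$; $a\prec x_2,b$; $x_2,b\prec\hat 1$. The face with boundary $x_2\text{--}\hat 1\text{--}b\text{--}a\text{--}x_2$ meets $C$ in the single edge $x_2\hat 1$, and its ``right boundary'' $x_2\text{--}a\text{--}b\text{--}\hat 1$ is not monotone. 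Since your diamond argument presupposes the $v_j$'s sit at ranks $p+1,\dots,q-1$, you must first \emph{choose} a face for which the right boundary is genuinely a covering path.

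One clean fix: let $C'$ be the second-leftmost maximal chain (such chains are totally ordered left-to-right by planarity). The region between $C$ and $C'$ decomposes into bounded faces whose bottom and top vertices lie on $C\cap C'$; each such face then has its left boundary on $C$ and its right boundary on $C'$, and the latter is monotone because $C'$ is a chain. Taking $F$ to be any of these faces, your argument from ``I claim $q=p+2$'' onward goes through verbatim. Without this (or an equivalent) choice of $F$, the existence assertion is unjustified.
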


\begin{Thm}   If $P$ is a twin-free, rank-connected planar lattice, then $D(P) \le 2$. \label{twin-free}
\end{Thm}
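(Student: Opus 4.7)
The plan is to prove the theorem by induction on $|P|$, producing a distinguishing 2-coloring whose red color class is a chain in $P$ (the blue class being its complement). The base case, in which $P$ is small enough to have trivial automorphism group, is immediate.

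For the inductive step, use Proposition~\ref{folklore-prop} to fix a standard diagram of $P$, and apply Lemma~\ref{karen-lem} to locate a rank $k$ whose leftmost element $x$ covers a unique element $a$ at rank $k-1$ and is covered by a unique element $b$ at rank $k+1$. Let $y$ denote the element immediately to the right of $x$ at rank $k$, if it exists; by the lemma, $y$ also covers $a$ and is covered by $b$. The twin-free hypothesis is crucial here: if the unique cover of $y$ were $a$ and the unique co-cover of $y$ were $b$, then the down-set and up-set of $y$ would agree with those of $x$, making $x$ and $y$ twins. Hence $y$ must have an additional cover or co-cover, and this asymmetry between $x$ and $y$ is what ultimately breaks any nontrivial automorphism.

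I would then pass to the smaller poset $P' = P \setminus \{x\}$. The lattice property and planarity of $P'$ are preserved because $x$ is never the unique meet or join of a pair: the unique cover/co-cover condition on $x$ implies that any meet or join through $x$ can instead be realized via $a$ or $b$. Rank-connectedness is preserved when $y$ exists, since $y$ itself provides a route through rank $k$ between $a$ and $b$; when $x$ is alone at rank $k$, the rank-connectedness of $P$ at the adjacent ranks carries over. Twin-freeness of $P'$ must be checked directly, since deleting $x$ could in principle introduce new twins among the neighbors of $a$ or $b$. After verifying these properties, the induction hypothesis yields a distinguishing 2-coloring of $P'$ whose red class is a chain $C'$. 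Extend to $P$ by coloring $x$ red precisely when $a$ and $b$ are consecutive in the chain $C' \cup \{\hat{0},\hat{1}\}$ (so that $C' \cup \{x\}$ remains a chain), and blue otherwise. Any color-preserving automorphism of $P$ must fix $x$: the leftmost-unique-cover-unique-co-cover characterization from Lemma~\ref{karen-lem}, together with the twin-free hypothesis, identifies $x$ uniquely among elements of its color and rank. With $x$ fixed, the automorphism restricts to a color-preserving automorphism of $P'$, which is the identity by the induction hypothesis.

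The main obstacle I expect is the preservation of twin-freeness (and, to a lesser extent, rank-connectedness) after the deletion of $x$. Degenerate configurations — for example, when $x$ is alone at rank $k$ and its removal creates a twin pair at rank $k-1$ or $k+1$, or when $a$ has no other upward neighbor — will likely require ad hoc treatment: one might instead delete a different element guaranteed by iterating Lemma~\ref{karen-lem}, delete $x$ together with a matching neighbor, or pin $x$ directly from its position in the diagram and recurse on a distinguished subposet rather than on $P'$.
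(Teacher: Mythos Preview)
Your inductive approach has a genuine circularity at the pinning step. To apply the induction hypothesis you must first show that every color-preserving automorphism $\phi$ of $P$ satisfies $\phi(x)=x$, so that $\phi$ restricts to $P'$. When $x$ is red this is fine (it is the unique red point at its rank), but when $x$ is blue your justification fails: ``leftmost'' is a property of the diagram, not of the poset, so it is not preserved by automorphisms; and ``unique cover and unique co-cover'' is intrinsic but does not single out $x$ at rank $k$. There may be several blue elements at rank $k$ each with a unique cover and a unique co-cover; twin-freeness only tells you that no two of them share the \emph{same} cover--co-cover pair $(a,b)$. An automorphism of $P$ could permute these elements (together with their covers and co-covers), and you have no way to rule this out, because you do not yet know that $a$ and $b$ are pinned in $P$ --- the induction hypothesis speaks only about automorphisms of $P'$. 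Your secondary worry is also real: deleting $x$ can create a twin for $a$, namely any $v$ at rank $k-1$ that agrees with $a$ on every point of $P$ except $x$.

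The paper avoids both problems by not inducting on $|P|$ at all. It fixes one standard diagram, colors the entire leftmost maximal chain red, and then proves pinning by a left-to-right sweep inside $P$: repeated use of Lemma~\ref{karen-lem} produces a sequence of maximal chains $C_0,C_1,\ldots$ differing in one point at a time, and at each step the relevant $a$ and $b$ lie on the previous (already pinned) chain. With $a$ and $b$ pinned in $P$, any image $w_j$ of $x_j$ must also be adjacent only to $a$ and $b$, forcing $x_j$ and $w_j$ to be twins --- the contradiction. The key difference from your plan is that the induction runs over chains within the fixed poset, so the pinning of $a$ and $b$ is available exactly when it is needed.
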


\begin{proof}
Let $r$ be the maximum rank of  points in $P$.  In any automorphism of $P$, rank is preserved.  If there exists a point $x$ in $P$ (other than $\hat{0}$ and $\hat{1}$) for which $x$ is the only element of its rank, then $x$ is pinned.    Thus we need only pin the elements below $x$ and separately the elements above $x$.  So without loss of generality, we may assume $P$ has at least two points at each rank other the lowest and highest ranks.   

By Proposition~\ref{folklore-prop}, we may fix a standard diagram of $P$.   At each rank, there is a leftmost point in the diagram, and  because the diagram is planar and  $P$ is rank-connected, the union of these points forms maximal chain $C_0$ from $\hat{0}$ to $\hat{1}$.
  Color the points on chain $C_0$ red, except for its minimal and maximal elements.  Since there is at most one red point at each rank, these points are pinned.  Color the remaining points  blue.  We show this coloring is distinguishing.

We apply  Lemma~\ref{karen-lem},  repeatedly to obtain a sequence of chains
 $C_0, C_1, \ldots, C_n$, each from $\hat{0}$ to $\hat{1}$, so that $C_i$ and $C_{i+1}$ are identical except for two points $x_i \in C_i$ and $x_{i+1} \in C_{i+1}$ where $x_i$ and $x_{i+1}$ have the same rank in $P$ and $x_{i+1}$ is immediately to the right of $x_i$ in the diagram.  
 We know that $x_0$ is pinned since it is red and we proceed by induction.
 
Assume the points $x_0, x_1,  \ldots, x_{j-1}$ are pinned, thus the points in $C_{j-1}$ are pinned.  If there are no  points at $x_j$'s rank that lie to the right of $x_j$ then $x_j$ is pinned since all remaining points at that rank are already pinned.  Otherwise, there exists one or more points at $x_j$'s rank that lie to the right of  $x_j$.  

Let $a$ be the point immediately below $x_j$ on chain $C_j$ and $b$ the point immediately above $x_j$ on $C_j$.   Suppose  $x_j$ is not pinned, so thus there exists a nontrivial automorphism $\phi$  of $P$  with  $\phi(x_j) = w_j$ for some $w_j \neq x_j$.  We know $w_j$ has the same rank as $x_j$ and is located to the right of $x_j$ since the points to the left of $x_j$ are already pinned.    Since $\phi$ is an automorphism, $\phi(a) \prec \phi(x_j) \prec \phi(b)$ and since $a$ and $b$ are pinned we have $a \prec w_j \prec b$.  
Partition the set of points with $b$'s rank as $B_1 \cup B_2 \cup \{b\}$  where the points in $B_1$ lie to the left of $b$ and the points in $B_2$ lie to the right of $b$.   By planarity, $w_j$ is not adjacent to any point in $B_1$.  However,   the points in $B_1 \cup \{b\}$ are pinned by our induction hypothesis, and $\phi(x_j) = w_j$, so $x_j$ cannot be adjacent to any point of $B_1$ either.    Also by planarity, $x_j$ is not adjacent to any points in $B_2$, thus the only point at $b$'s rank that is adjacent to $x_j$ is the point $b$.  Similarly, the only point at $a$'s rank adjacent to $x_j$ is $a$.  So $x_j$ is adjacent only to $a$ and $b$.  The same must be true of $w_j$ since $\phi(x_j) = w_j$ and $a$ and $b$ are pinned. 
  This means $x_j$ and $w_j$ are twins in $P$, a contradiction.  Thus $x_j$ is pinned and this completes the induction.
\end{proof}

\section{Open Questions}

We conclude with some open questions.

\begin{Ques} \rm Is Theorem~\ref{boolean-th} tight for all $n\geq 5$?
\end{Ques}

\begin{Ques} \rm Many theorems about 2-distinguishability can be proven using the Motion Lemma, proved by Russell and Sundaram \cite{RuSu98}.  
Is there a proof of Theorem~\ref{distrib-lattice-thm} using the Motion Lemma?
\end{Ques}

\begin{Ques} \rm Hadjicostas \cite{Ha19} has found the generating function for the number of distinguishing 2-colorings of an $n$-cycle. Given a distributive lattice $L$, what is the generating function of the number of distinguishing 2-colorings of $L$?
\end{Ques}

\begin{Ques} \rm The {\it cost} of 2-distinguishing a graph $G$ is the minimum size of a color class in any 2-distinguishing labeling of $G$, see \cite{Bo13}. What is the cost of 2-distinguishing a distributive lattice? The cost may be smaller than the minimum sizes of color classes in the 2-labelings we have used in our proofs. For example, the cost of 2-distinguishing $L_{p^2q^2}$ is 1, as can be seen by coloring point $p$ red and the remaining points blue, whereas the cost of 2-distinguishing $L_{pqr}$ is 2, because there will still be an automorphism that preserves the colors even if one point is fixed. 
\end{Ques}

\end{document}